%

\RequirePackage{etoolbox}
\csdef{input@path}{{style/}{graphics/}}
\documentclass[aap,MSNbibl,dvips]{arximspdf}
\makeatletter
   \@ifpackageloaded{graphicx}{}{\usepackage{graphicx}}
\makeatother

%

\doi{10.1214/14-AAP1031} 
\volume{25}
\issue{3}
\pubyear{2015}
\firstpage{1581}
\lastpage{1615}

\makeatletter
\newcommand{\rrVert}{\Vert}
\newcommand{\llVert}{\Vert}
\newproclaim{defn}{Definition}
\newproclaim{remark}{Remark}
\newtheorem{teo}{Theorem}[section]
\newtheorem{lem}[teo]{Lemma}
\newtheorem{prop}[teo]{Proposition}
\newtheorem{cor}[teo]{Corollary}
\newtheorem{question}{Question}
\makeatother

\begin{document}
\begin{frontmatter}

\title{Mixing time of Metropolis chain based on random transposition
walk converging to multivariate Ewens distribution\thanksref{T1}}
\runtitle{Mixing time of Ewens Metropolis random transposition walk\hspace*{10pt}}

\begin{aug}
\author[A]{\fnms{Yunjiang}~\snm{Jiang}\corref{}\ead[label=e1]{jyj@math.stanford.edu}}
\runauthor{Y. Jiang}
\affiliation{Stanford University}
\address[A]{Department of Mathematics\\
Stanford University\\
Stanford, California 94305\\
USA\\
\printead{e1}} 
\end{aug}
\thankstext{T1}{Supported in part by NSF Graduate Fellowship.}

\received{\smonth{5} \syear{2013}}
\revised{\smonth{3} \syear{2014}}

%
\begin{abstract}
We prove sharp rates of convergence to the Ewens equilibrium
distribution for a family
of Metropolis algorithms based on the random transposition shuffle on
the symmetric group,
with starting point at the identity. The proofs rely heavily on the
theory of
symmetric Jack polynomials, developed initially by
Jack [\textit{Proc. Roy. Soc. Edinburgh Sect.~A}  \textbf{69} (1970/1971) 1--18],
Macdonald [\textit{Symmetric Functions and Hall Polynomials} (1995) New York] and
Stanley [\textit{Adv. Math.} \textbf{77} (1989) 76--115].
This completes the analysis started by
Diaconis and Hanlon in [\textit{Contemp. Math.} \textbf{138} (1992) 99--117].
In the end we also explore other integrable Markov chains that can be
obtained from symmetric function theory.
\end{abstract}

%
\begin{keyword}[class=AMS]
\kwd[Primary ]{60J05}
\kwd[; secondary ]{05E05}
\end{keyword}
\begin{keyword}
\kwd{Jack polynomials}
\kwd{Metropolis algorithm}
\kwd{mixing time}
\kwd{random transposition}
\end{keyword}
\end{frontmatter}

\section{Introduction}
There is a well-known bijection between the set of partitions of $n$
and the conjugacy classes of the symmetric group $S_n$. The partition
that a permutation $\sigma\in S_n$ corresponds to is simply given by
its cycle structure. In fact this connection is the basis for the
classical representation theory of $S_n$ (see, e.g., \cite{FH}): the
set of irreducible representations of $S_n$ is indexed by the set
$\mathcal{P}_n$ of partitions of~$n$. Since $S_n$ is finite, it can
also be endowed with a probability space structure. The most natural
measure on $S_n$ is thus the uniform measure, with each permutation
getting a weight of $1/n!$. Sampling from this uniform measure is
important for many statistical applications \cite{PD}, such as
testing independence of $n$ i.i.d. uniform random variables on an
ordered set. Its intimate connection with card shuffling models has
also generated a wonderful array of mathematics, most notably the
determination of their mixing times; see, for instance, \cite{dovetail,DiaconisShahshahani} and \cite{MorrisLreversal}.

One of the most natural generalizations of the uniform measure on $S_n$
is a \mbox{1-}parameter family of so-called multivariate Ewens distributions,
named after Warren Ewens, who derived the partition function of this
probability measure. It is defined by giving each permutation $\sigma$
a weight of $\alpha^{\ell(\sigma)}$, $\alpha> 0$, where $\ell
(\sigma)$ is the number of cycles in $\sigma$; hence it can be viewed
as an exponentially tilted family based on the uniform measure. The
distribution was first applied to population genetics, in which it
describes the distribution of frequencies of alleles in a sample of
genes (\cite{Johnson}, Chapter~41).

Many important properties of the uniform measure on $S_n$ continues to
hold for the multivariate Ewens distribution \cite{EwensPoisson}. For
instance, the two perfect sampling schemes of the uniform measure,
Feller coupling and the Chinese restaurant process, both generalize to
the Ewens case. In this article, we describe a much more subtle
property of the uniform measure that has been successfully generalized
to the $\alpha$ deformed setting. In a nutshell, the characters of the
symmetric group $S_n$ form a basis in the Fourier space of class
functions on $S_n$ under the uniform measure \cite{PD}. When the
underlying measure is $\alpha$-deformed from the uniform, one can only
make sense of Fourier transforms of a particular type of class
functions, namely the ones supported on transpositions and the identity
class. In that case, the basis in the Fourier space becomes the matrix
coefficients of the transition from Jack symmetric polynomials basis to
the power sum symmetric polynomials basis. These generalize the
characters of $S_n$, which happen to be the transition coefficients
from Schur polynomial basis to the power-sum polynomial basis.

This Fourier analytic property was first obtained by Stanley in \cite
{StanleyJack}. Later Hanlon \cite{Hanlon} applied it to the study of
the Metropolis Markov chain based on random transposition walk on $S_n$
that converges to the multivariate Ewens distribution. Diaconis and
Hanlon \cite{DiaconisHanlon} further initiated the investigation of
total variation mixing time of this chain.

In light of the sharp result in \cite{DiaconisShahshahani} for the
uniform case ($\alpha=1$), it is natural to wonder what's the exact
mixing time for the Diaconis--Hanlon Metropolis walk ($\alpha\neq1$).
In this paper, we prove a pair of matching upper and lower bound for
the mixing time that applies to all $\alpha> 0$, which together imply
the cut-off phenomenon. Previously Diaconis and Hanlon \cite
{DiaconisHanlon} outlined a proof of the upper bound in the case
$\alpha> 1$ and conjectured that it was tight.\looseness=1

In the \hyperref[app]{Appendices}, we include some preliminary attempts to generalize
the walk studied here in various directions. These were motivated by
questions of Diaconis on whether other nontrivial Markov chains can be
constructed from symmetric function theory. First we look at the action
of the Sekiguchi--Debiard operator on other classical bases of
symmetric polynomials. We also consider higher order operators, as
given by the operator valued generating\vadjust{\goodbreak} function in \cite{Macdonald},
page 317. These turn out to give new local move Markov chains
converging to $\operatorname{MED}(\theta)$, albeit without simple group
theoretic interpretations. Finally we look at Laplace--Beltrami
operators associated with other root systems. Recall the Schur--Weyl
duality between the simple Lie groups $SU(n)$ and the finite groups
$S_n$. This leads to an interpretation of the Sekiguchi--Debiard
operator (as well as their quantized version given by Macdonald \cite
{Macdonald}) as associated with root system of type $A_n$. The
appropriate generalizations were first discovered by Heckman and Opdam
\cite{HeckmanOpdamI} in the context of Hamiltonian systems of
particles on a circle, and later extended to the Macdonald case in
\cite{MacdonaldRoots}; see also \cite{Koornwinder} and references
therein for a 5-parameter generalization.

Jack polynomials, which form the backbone of the argument presented
here, turn out to be special cases of Macdonald polynomials of type
$A_n$. Diaconis and Ram \cite{Aux} interpreted them as eigenfunctions
of an auxilliary variable algorithm on the space of partitions, which
can be viewed as a quantized version of the local walk studied here.

\section{Metropolis walk starting at the identity class}

The multivariate Ewens distribution with parameter $\alpha$ is defined
on $S_n$ with $P(\sigma)$ proportional to $\alpha^{\ell(\sigma)}$
where $\ell(\sigma)$ is the number of cycles of $\sigma$. The
normalization constant $z_n(\alpha) = \alpha_{(n)}:= \alpha(\alpha
+1) \cdots(\alpha+ n-1)$.

Consider now the random transposition walk on $S_n$, defined by picking
a pair of numbers $i\neq j$ at random, and multiplying the current
state in $S_n$ by the transposition $(ij)$. In this form, the walk is
periodic and does not converge. But if we make it lazy, then it
converges to the uniform measure on $S_n$. By metropolizing the
nonlazy walk to the multivariate Ewens distribution with parameter
$\alpha$, we create a new Markov chain that converges to $\operatorname{MED}(\alpha)$; see \cite{DiaconisHanlon} for details on the
Metropolis algorithm. The walk behaves differently for $\alpha> 1$ and
$\alpha< 1$. And as long as $\alpha\neq1$, it converges, because it
always has positive holding probability.

\emph{Warning}. The $\alpha$ parameter here will be the reciprocal of
$\theta$ below.

If we start the Metropolis walk described in the introduction from the
identity element, then we can view it as a walk either on the symmetric
group or on the set of partitions. It is the latter interpretation that
allows for sharp analysis with other starting points.

%
\begin{teo} \label{Main}
For $\theta\in(0, \infty) \setminus\{1\}$, let $P_\theta$ be the
discrete time Metropolis chain based on random transposition walk
starting\vspace*{1pt} from $\operatorname{id}$, converging to the multivariate
Ewens distribution $\pi$ with parameter $\theta^{-1}$ (so identity
has the largest mass when $\theta< 1$). Explicitly, let $\lambda(\pi
)$ be the cycle structure of the permutation $\pi$, $\lambda^t$ be
the transposition of $\lambda$ as a Ferrers diagram and denote
$n(\lambda) = \sum_{i=1}^n {\lambda^t_i\choose 2}$. Then the
transition rule is given by
\begin{eqnarray*}
P_\theta(\pi,\sigma) &=& \cases{ \displaystyle1- 1 \wedge\theta+
\frac{n(\lambda(\pi)^t)}{{n\choose 2}} \bigl(1 \wedge\theta- 1 \wedge\theta^{-1}\bigr),
\vspace*{3pt}\cr
\hspace*{84.5pt}\mbox{if $\sigma =\pi$,}
\vspace*{3pt}\cr
\displaystyle\frac{1}{{n\choose 2}} (1 \wedge\theta),\hspace*{33pt}\mbox{if $\sigma= \pi(i,j)$ and $\ell(\sigma) = \ell(\pi)-1$},
\vspace*{5pt}\cr
\displaystyle
\frac{1}{{n\choose 2}}\bigl(1 \wedge\theta^{-1}\bigr),\qquad\mbox{if $\sigma= \pi(i,j)$ and $\ell(\sigma) = \ell(\pi) +1$},
\vspace*{5pt}\cr
0,\hspace*{74pt}\mbox{otherwise.}}
\end{eqnarray*}
Here $1 \le i < j \le n$.
The chain $P_\theta$ has a total variation cut-off at $t = \frac{1}{2}
(\frac{1}{\theta} \vee1) n \log n$. This means
\begin{eqnarray*}
\lim_{c \to\infty}\limsup_{n \to\infty} \bigl\llVert
P_{\operatorname
{id}}^{t(c)} - \pi\bigr\rrVert _{\mathrm{TV}} &=& 0,
\\
\lim_{c \to-\infty} \liminf_n \bigl\llVert
P_{\operatorname{id}}^{t(c)} - \pi\bigr\rrVert _{\mathrm{TV}} &=& 1
\end{eqnarray*}
for $t(c):= \frac{1}{2} (\frac{1}{\theta} \vee1) n (\log n + c)$.
\end{teo}

%
\begin{remark}
(1)~Notice the chain has no intrinsic holding: when $\theta= 1$ it
corresponds to the completely industrious random transposition walk
with probability~$\frac{1}{{n\choose 2}}$ to go to a neighboring
permutation. If one inserts a holding of $1/n$, that is, $P \mapsto
\frac{1}{n}I + (1 - \frac{1}{n}) P$, then the asymptotic cut-off
profile stays the same and has no removable discontinuity at $\theta=1$.

(2) The Metropolis chain defined above can be projected to conjugacy
classes of $S_n$, namely partitions, provided we start at the identity
element. The transition matrix takes the following form:
\begin{eqnarray*}
P_\theta(\lambda,\mu) &=& \cases{ \displaystyle1- 1 \wedge\theta+
\frac{n(\lambda^t)}{{n\choose 2}} \bigl(1 \wedge\theta- 1 \wedge\theta^{-1}\bigr),
\vspace*{3pt}\cr
\hspace*{90pt}\mbox{if $\mu=\lambda $,}
\vspace*{3pt}\cr
\displaystyle\frac{\lambda_i \lambda_j}{{n\choose 2}} (1 \wedge\theta),\hspace*{30pt}\mbox{if $\mu_k = \lambda_i + \lambda_j$,}
\vspace*{5pt}\cr
\displaystyle\frac{\lambda_k}{{n\choose 2}}\bigl(1 \wedge\theta ^{-1}\bigr),\hspace*{27.5pt}\mbox{if $\mu_i + \mu_j = \lambda_k$ and $\mu_i \neq\mu _j$,}
\vspace*{5pt}\cr
\displaystyle\frac{\lambda_k}{2 {n\choose 2}}\bigl(1 \wedge\theta ^{-1}\bigr),\qquad\mbox{if $\mu_i +\mu_j = \lambda_k$ and $\mu_i = \mu_j$,}
\vspace*{3pt}\cr
0,\hspace*{79pt}\mbox{otherwise.}}
\end{eqnarray*}

Here $1 \le i < j \le\ell(\mu)$ and $1 \le k \le\ell(\lambda)$.
Furthermore in the second line, $\mu\setminus\mu_k = \lambda
\setminus\{\lambda_i, \lambda_j\}$. In other words, $\mu$ is
obtained from $\lambda$ by joining $\lambda_i$ and $\lambda_j$ into
a single part $\mu_k$. Similarly, for the third and fourth lines of
the formula above, $\mu\setminus\{\mu_i, \mu_j\} = \lambda
\setminus\lambda_k$, that is, $\mu$ is obtained by breaking a part
$\lambda_k$ in $\lambda$ into two parts, $\mu_i$ and $\mu_j$.

(3) The first order phase transition at $\theta=1$ for the cut-off
value is not surprising, because the Metropolis chain has different
forms for $\theta< 1$ and for $\theta> 1$.

(4) $\theta$ denotes the inverse of the Ewens sampling parameter the
chain $P_\theta$ converges to. This choice of convention is justified
by the fact that the left eigenfunctions of the chain $P_\theta$ are
the transition coefficients from the Jack polynomials with parameter
$\theta$ to the power sum polynomials, as derived in~\cite{Hanlon}.

(5) It will be interesting to see what happens when the $\theta$
value in the transition probability is allowed to be state dependent,
but satisfying some uniform bound $c < \theta(\lambda) < c^{-1}$ for\vspace*{1pt}
$c$ independent of $n$. My conjecture is that it will always take at~least $\frac{1}{2} n \log n$ steps\vspace*{1pt} to converge to its stationarity
distribution, which is no longer in the Ewens family.
\end{remark}

The next four sections will be devoted to the proof of Theorem~\ref{Main}.

\section{Preliminaries on $\mathcal{L}^2$ mixing time}

%
\begin{lem} \label{doublystochastic}
Given a \emph{reversible} ergodic Markov chain $P$ on a finite state
space $X$,
let $f_j$ be the right eigenfunctions, normalized so that
\[
\sum_x f_j(x)^2 \pi(x)
= 1,
\]
with corresponding eigenvalues $\beta_j$. Then $g_j(x):= f_j(x) \pi
(x)$ are left eigenfunctions of $P$, with the \emph{same} eigenvalues,
satisfying
\[
\sum_x g_j(x)^2
\frac{1}{\pi(x)} =1.
\]
Furthermore,
%
\begin{eqnarray}
\frac{1}{\pi(x)} &=& \sum_j f_j^2(x),
\label{rightsum}
\\
\pi(x) &=& \sum_j g_j^2(x).
\label{leftsum}
\end{eqnarray}
\end{lem}

\begin{pf}
By reversibility, we have $\pi(x) P(x,y) = \pi(y) P(y,x)$. Therefore,
\begin{eqnarray*}
\beta_j f_j(x) &=& \sum_y
P(x,y) f_j(y)=\sum_y
\frac{\pi(x)}{\pi
(x)} P(x,y) f_j(y)
\\
&=& \sum_y
\frac{\pi(y)}{\pi(x)} P(y,x) f_j(y).
\end{eqnarray*}
Now multiplying both sides by $\pi(x)$, we get
\[
\beta_j \pi(x) f_j(x) = \sum
_y \pi(y) f_j(y) P(y,x).
\]
This proves the first part.

The last two identities are nothing but a restatement of the fact that
the matrix $\Pi(x,y):= \pi(x) P(x,y) /\pi(y)$ is doubly stochastic;
that is, each row and column sums to $1$. Here is a formal proof. Since
$\{f_j\}$ forms a basis, we can decompose the function $z \mapsto
1_x(z)$ in it,
\[
1_x(z) = \sum_j c_j
f_j,
\]
where the coefficients $c_j$ are given by
\[
c_j = \langle f_j, 1_x
\rangle_{\mathcal{L}^2(\pi)}= \sum_z
1_x(z) f_j(z) \pi(z) = f_j(x) \pi(x).
\]
The first equality follows immediately. The second is similar.
\end{pf}

%
\begin{lem}
Under the same notation as the previous lemma, one can bound the total
variation distance to stationarity at time $k$ starting at state $x$ by
%
\begin{eqnarray}
4\bigl\llVert P_x^k - \pi\bigr\rrVert
_\mathrm{TV}^2 &\le&\biggl\llVert \frac
{P_x^k}{\pi(x)} - 1\biggr
\rrVert _{\mathcal{L}^2(\pi)}^2 \label
{Cauchy-Schwarz}
\\
&=& \frac{1}{\pi^2(x)} \sum_j
\beta_j^{2k} g_j^2(x) -1.
\label
{upperboundlemma}
\end{eqnarray}
\end{lem}

\begin{pf}
The first inequality (\ref{Cauchy-Schwarz}) follows directly from the
Cauchy--Schwarz inequality. To prove the second formula (\ref{upperboundlemma}), first write
\begin{eqnarray*}
\biggl\llVert \frac{P_x^k}{\pi} - 1\biggr\rrVert _2^2
&=& \sum_y \pi(y)\biggl[\biggl(\frac
{P_x^k(y)}{\pi(y)}
\biggr)^2 -1\biggr]= \sum_y
\frac{(P_x^k(y))^2}{\pi(y)} - \pi(y).
\end{eqnarray*}
Using reversibility again [in the extended form $\pi(x) P^k(x,y) = \pi
(y) P^k(y,x)$], we can write
\[
\frac{(P_x^k(y))^2}{\pi(y)} = \frac{P_x^k(y) P_y^k(x)}{\pi(x)}.
\]
Thus summing over $y \in X$, we get
\begin{eqnarray*}
\biggl\llVert \frac{P_x^k}{\pi} -1 \biggr\rrVert _2^2 &=&
\frac{P^{2k}(x,x)}{\pi
(x)} - 1.
\end{eqnarray*}

Next write the function $y \mapsto P_x^{2k}(y)$ as the result of a row
vector multiplied by a matrix,
\begin{eqnarray*}
P_x^{2k}(y) &=& \sum_z
1_x(z) P^{2k}(z,y).
\end{eqnarray*}
By the previous lemma, we have
%
\begin{eqnarray}
\label{indicator} 1_x(y) &=& \sum_j
c_j g_j(y),
\end{eqnarray}
where $c_j = \sum_z \frac{1}{\pi(z)} 1_x(z) g_j(z) = \frac
{g_j(x)}{\pi(x)}$.\vspace*{2pt}

Finally evaluating at $y = x$ in (\ref{indicator}), we obtain
\begin{eqnarray*}
\frac{P^{2k}(x,x)}{\pi(x)} &=& \sum_j \frac{g_j(x)^2}{\pi(x)^2}
\beta_j^{2k}.
\end{eqnarray*}\upqed
\end{pf}

\section{Results from symmetric function theory}

First we recall from Hanlon~\cite{Hanlon} that the eigenvalues for the
chain $P_\theta$ projected onto conjugacy classes of $S_n$, with
$\theta> 1$, are given by
%
\begin{equation}
\label{Hanlonbeta} \beta_\lambda= \frac{n(\lambda^t) - \theta^{-1} n(\lambda
)}{{n\choose 2}}.
\end{equation}
For an independent proof with pointers to literature, see the proof of
Theorem~\ref{diagonalization}.

Next we derive the eigenvalues of the chain $P_{\theta}$, for $\theta
\in(0,1)$. Notice this is not the same chain as that studied in \cite
{Hanlon}. Here the identity element gets the biggest mass, whereas in
\cite{Hanlon}, identity has the smallest mass [Ewens sampling with
parameter $ \in(0,1)$]. But the same result of Macdonald can be used
here to derive eigenvalues. Indeed, consider the following matrix
$T_\theta$ defined by
\begin{eqnarray*}
T_\theta(\pi,\sigma) &=& \cases{ \displaystyle\frac{(\theta-1 )n(\pi)}{\theta{n\choose 2}}, &\quad
if $\sigma=\pi$,
\vspace*{3pt}\cr
\displaystyle\frac{1}{{n\choose 2}}, &\quad if $\sigma= \pi
(i,j)$ and $\ell(\sigma) = \ell(\pi)-1$,
\vspace*{5pt}\cr
\displaystyle\frac{1}{ \theta{n\choose 2}}, &
\quad if $\sigma= \pi(i,j)$ and $\ell(\sigma) = \ell(\pi) +1$,
\vspace*{5pt}\cr
0, &\quad
otherwise,}
\end{eqnarray*}
where $n(\pi) = \sum_i {\pi^t_i\choose 2}$ and $\{\pi_i\}$ is
the partition structure of $\pi$. This quantity gives the number of
ways to break a part in the partition structure of $\pi$ into two
parts, using multiplication by a transposition.

Hanlon considered the case $\theta\ge1$ (his $\alpha$ is our $\theta
$), here we extend to $\theta\in(0,1)$, which is no longer a Markov
matrix because the diagonal entries are no longer nonnegative.
Nevertheless The rows still sum to $1$. Then his Theorem 3.5 continues
to hold because the proof never uses $\theta\ge1$. Likewise, Theorem
3.9 holds for $\theta< 1$. To get $P_\theta$, we simply need to
rescale $T_\theta$ by $\theta$ and add a constant multiple $cI$ of
identity matrix.\vspace*{1pt} $c$ can be obtained by looking at the top eigenvalue.
By Theorems 3.5~and~3.9 of \cite{Hanlon}, $\theta T_\theta$ has
eigenvalues $\frac{\theta n(\lambda^t) - n(\lambda)}{{n\choose
2}}$. Thus we need to add $1-\theta$ in order for $\beta_{(n)}$ to
equal~$1$.

Combining the two cases, we have the following formula for eigenvalues
of $P_\theta$:
%
\begin{equation}
\label{betalambda} \beta_\lambda(\theta) = 1 - \theta\wedge1 +
\frac{ \theta
n(\lambda^t) - n(\lambda)}{(\theta\vee1) {n\choose 2}}.
\end{equation}

Denote $r(\lambda) = \frac{n(\lambda^t) - n(\lambda)}{{n\choose 2}}$. The following lemma collects a bunch of estimates about
$\beta_\lambda$:

%
\begin{lem} \label{technical}
Let $\succeq$ be the natural partial order on the set of partitions
defined as follows: given two partitions represented by Ferrers
diagrams $\lambda$ and $\lambda'$, say $\lambda\succeq\lambda'$ if
$\lambda$ can be obtained by successive up and right moves of blocks
of $\lambda'$.

\begin{longlist}[(3)]
\item[(1)] $n(\lambda^t)$ is monotone, and $n(\lambda)$ is anti-monotone
in the above partial order; that is, for $\lambda\succeq\lambda'$,
\begin{eqnarray*}
n\bigl(\lambda^t\bigr) &>& n\bigl(\lambda'^t
\bigr),
\qquad
n(\lambda) < n\bigl(\lambda'\bigr).
\end{eqnarray*}

\item[(2)]
$\beta_\lambda$ is monotone with respect to the natural partial order
on $\lambda$. Thus \mbox{$\beta_{(n)} \ge\beta_\lambda$} for all $\lambda
\vdash n$, $\beta_\lambda\le\beta_{(\lambda_1, n-\lambda_1)}$ and
$\beta_\lambda\ge\beta_{(\lambda_1, 1^{n-\lambda_1})}$.

\item[(3)]
Furthermore, for $\lambda_1 \ge\frac{n}{2}$,
%
\begin{equation}
\label{biglambda1} \beta_\lambda\le1 - (\theta\wedge1) \frac{2\lambda_1(n-\lambda
_1)}{n(n-1)},
\end{equation}
and in general
%
\begin{equation}
\label{generallambda} \beta_\lambda\le1 - (\theta\wedge1) \biggl(1-
\frac{\lambda_1 -1}{n-1}\biggr).
\end{equation}
In particular, if $\beta_\lambda(\theta) \ge0$, the above two
inequalities hold with $|\beta_\lambda|$.

\item[(4)]
Finally, if $\beta_\lambda(\theta) < 0$, then $\beta_{\lambda
^t}(\theta) > 0$ and $|\beta_\lambda| \le\beta_{\lambda^t}$.
\end{longlist}
\end{lem}

\begin{pf}
(1) It suffices to check the first assertion for $\lambda$ and
$\lambda'$ that differ by one block, that is, $\lambda_i = \lambda
_i'+1$, $\lambda_j = \lambda_j'-1$, $i< j$. Then
\begin{eqnarray*}
n\bigl(\lambda^t\bigr) - n\bigl(\lambda'^t
\bigr) &=& \tfrac{1}{2} \bigl[\lambda_i(\lambda _i-1)
+ \lambda_j(\lambda_j-1) - \lambda'_i
\bigl(\lambda'_i-1\bigr) - \lambda '_j
\bigl(\lambda'_j-1\bigr) \bigr]
\\
&=& \lambda_i' - \lambda_j \ge0,
\end{eqnarray*}
using the fact $\lambda_i \ge\lambda_j$ and $\lambda_i' \ge\lambda
_j'$ by definition of Ferrers diagram.
The antimonotonicity of $n(\lambda)$ follows by taking transpose.

(2) This follows directly from the previous assertion and formula
(\ref{betalambda}) for $\beta_\lambda$ in terms of $n(\lambda)$
and $n(\lambda^t)$.

(3) Equation~(\ref{biglambda1}) follows from $\lambda\preceq(\lambda_1,
n-\lambda_1)$ and monotonicity, after throwing away the term $-\frac
{n(\lambda)}{(\theta\vee1) {n\choose 2}}$.

For (\ref{generallambda}), we again throw away the term $-\frac
{n(\lambda)}{{n\choose 2}}$ in $\beta_\lambda$ to obtain
\begin{eqnarray*}
\beta_\lambda&\le&1 - \theta\wedge1 + \frac{\theta n(\lambda
^t)}{(\theta\vee1) {n\choose 2}}= 1-(\theta
\wedge1) \biggl(1-\frac
{n(\lambda^t)}{{n\choose 2}} \biggr)
\\
&=& 1- (\theta\wedge1) \biggl(1-
\frac
{\sum_j \lambda_j(\lambda_j -1)}{n(n-1)} \biggr)
\\
&\le&1 - (\theta\wedge1) \biggl(1-\frac{(\lambda_1 -1)\sum_j \lambda
_j}{n(n-1)}\biggr)= 1 -(\theta\wedge1)
\biggl(1-\frac{\lambda_1 -1}{n-1}\biggr).
\end{eqnarray*}

(4) Here we consider $\theta\ge1$ and $\theta< 1$ separately.
When $\theta\ge1$,
\begin{eqnarray*}
\beta_\lambda&=& \frac{1}{\theta{n\choose 2}} \bigl(\theta n\bigl(\lambda
^t\bigr) - n(\lambda)\bigr).
\end{eqnarray*}
If $\beta_\lambda\le0$, then $\theta n(\lambda^t) - n(\lambda) \le
0$. Since $\theta\ge1$, $n(\lambda) \ge n(\lambda^t)$. So
\begin{eqnarray*}
\theta n(\lambda) - n\bigl(\lambda^t\bigr) \ge n(\lambda) - \theta n
\bigl(\lambda ^t\bigr) &\ge&0,
\end{eqnarray*}
which implies $\beta_{\lambda^t} \ge|\beta_\lambda| \ge0$.

Next let $\theta< 1$. Then we can write
\begin{eqnarray*}
\beta_\lambda&=& 1- \theta+ \theta\frac{n(\lambda^t)}{{n\choose 2}} -
\frac{n(\lambda)}{{n\choose 2}}= \biggl[1- \frac
{n(\lambda)}{{n\choose 2}}\biggr] - \theta\biggl[ 1-
\frac{n(\lambda
^t)}{{n\choose 2}}\biggr].
\end{eqnarray*}
If $\beta_\lambda\le0$, then since $\theta< 1$,
\[
1- \frac{n(\lambda^t)}{{n\choose 2}} \ge1- \frac{n(\lambda
)}{{n\choose 2}}.
\]
Switching $\lambda$ and $\lambda^t$ we again get
\[
\beta_{\lambda^t} \ge|\beta_\lambda| \ge0.
\]\upqed
\end{pf}

We also need some definitions and results from Diaconis and Hanlon
\cite{DiaconisHanlon}:

\begin{defn} \label{definition1}
We collect some notation to be used in the main proof below, some of
which will be repeated; they are, for the most part, taken from \cite
{DiaconisHanlon}:
\begin{longlist}[(3)]
\item[(1)]
Given a partition $\lambda\vdash n$, and a position $s=(i,j)$ in its
Ferrers diagram (i.e., $j \le\lambda_i$), define
\begin{eqnarray*}
h^*(s) &=& h^*_\lambda(s):= (a+1) \theta+ \ell,
\\
h_*(s) &=& h_*^\lambda(s):= a \theta+ (\ell+1),
\end{eqnarray*}
where $a = \lambda_i -j$ denotes the number of positions in the same
row and strictly to the right of $s$ (the arm length), and $\ell=
\lambda^t_j - i$ denotes the number of positions in the same column
and strictly below $s$ (the leg length).
\item[(2)]
Define the generalization of hooklength product,
\[
j_\lambda= j_\lambda(\theta):= \prod
_{s \in\lambda} h_*(s) h^*(s).
\]
When $\theta= 1$, this becomes the product of the hooklengths of all
the blocks in the diagram of $\lambda$.
\item[(3)] Define $c_{\lambda, \rho} = c_{\lambda,\rho}(\theta)$ to be
the change of basis coefficients from Jack symmetric polynomials
$J_\lambda(\theta)$ (not to be confused with $j_\lambda$ above) to
power sum polynomials, that is,
%
\begin{eqnarray}
\label{Jackpower} J_\lambda(\theta) &=& \sum_{\rho\vdash n}
c_{\lambda,\rho}(\theta) p_\rho.
\end{eqnarray}
See \cite{StanleyJack} for extensive development of properties of Jack
polynomials. When \mbox{$\theta= 1$}, $J_\lambda(1) = H_\lambda s_\lambda$,
where $H_\lambda= j_\lambda(1)$ is the hooklength product, and
$s_\lambda$ is the Schur polynomial indexed by $\lambda$.

\item[(4)] Denote\vspace*{1pt} by $\pi= \pi_\theta$ the Ewens sampling measure with
parameter $\theta^{-1}$; recall $\pi_\theta(\sigma) = \theta
^{-\ell(\sigma)} / z_n(\theta^{-1})$, where $z_n(\theta^{-1})=
\prod_{i=1}^n (\theta^{-1} + i-1)$ is the\vspace*{2pt} Ewens sampling formula.
Also let $\Pi= \Pi_{n,\theta}:= \pi_\theta(1^n)^{-1} = \prod_{i=1}^n (1 + \theta(i-1))$.
\end{longlist}
\end{defn}

Note that when $\theta=1$, $j_\lambda(1)$ is exactly the square of
the product of hook lengths of all positions in $\lambda$, which is
well known to be $(\frac{n!}{\dim\pi_\lambda})^2$ by the\vspace*{1pt} hooklength
formula. By Wedderburn's structure theorem (see \cite{DummitFoote},
Chapter~18, Theorem~10), we also have
\[
n! = \sum_\lambda\dim\pi_\lambda^2,
\]
therefore $\sum_\lambda\frac{1}{j_\lambda(1)} = \frac{1}{n!}$.

%
\begin{teo}\label{diagonalization}
The left eigenfunctions of the Metropolis chain $P_\theta$ defined on
partitions, normalized in $\mathcal{L}^2(\mathcal{P}_n, 1/\pi_\theta
)$, are given by
%
\begin{eqnarray}
\label{lefteigenfunction} g_\lambda(\rho) &=& \frac{c_{\lambda,\rho}}{(j_\lambda\Pi/ (\theta
^n n!))^{1/2}}
\end{eqnarray}
with corresponding eigenvalues stated in (\ref{betalambda}).
\end{teo}

\begin{pf}
We synthesize the arguments found in \cite{Hanlon}, Definition~3.8 to
Definition~3.12 and \cite{DiaconisHanlon}, Theorem~1. The result from
\cite{Macdonald}, Chapter VI, Section~4, shows that the Macdonald
polynomials are simultaneous eigenfunctions of the Macdonald operators
$D_{q,t}^r$, $r=0,\ldots, n$. Specializing to the limit $q=t^\theta$,
$t \to1$ and after some affine linear transformation, the same results
hold for Jack polynomials and the associated Sekiguchi--Debiard
operators (\ref{Sekiguchi-Debiard}), $D_\theta(X)$. The $X^2$
coefficient of this operator valued generating function turns out to be
the following Laplace--Beltrami-type operator (our notation differs
slightly from \cite{Macdonald}, page 320): let $f$ be a homogeneous
polynomial of degree $N$ in $n$ variables, then
\[
D_\theta^2 f = \biggl(-\frac{\theta^2}{2} U_n -
\theta V_n + c_n\biggr) f,
\]
where $U_n = \sum_{i=1}^n (x_i\partial_i)^2 - x_i \partial_i = \sum_{i=1}^n x_i^2 \partial_i^2$, $V_n =\frac{1}{2} \sum_{i \neq j}
\frac{x_i^2 \partial_i - x_j^2 \partial_j}{x_i - x_j}$, and $
c_n = \theta^2 {N\choose 2} + \theta N {n\choose 2} + \frac
{1}{4} {n\choose 3} (3n-1)$; see (\ref{Dtheta^2}) for a proof.\vspace*{1pt}

After an affine transform, we arrive at the following cleaner operator:
%
\begin{equation}
\label{secondorderLaplace-Beltrami} L^2_\theta:= \frac{1}{{N\choose 2}}\biggl(
\frac{1}{2}U_n +\frac
{1}{\theta} \bigl(V_n -
(n-1)N \bigr)\biggr),
\end{equation}
which readily admits a Markov chain interpretation, when acting on
power sum polynomials. Combining (\ref{D0,0;2}), (\ref{D1,1;0})
and (\ref{Dtheta^2}), we have
\[
L_\theta^2 p_\lambda 
=
\frac{p_\lambda}{{N\choose 2}}\Biggl( \sum_{s < t}
\lambda_s \lambda_t \frac{p_{\lambda_s + \lambda_t}}{p_{\lambda_s}
p_{\lambda_t}} +\bigl(1-
\theta^{-1}\bigr) n\bigl(\lambda'\bigr) +
\theta^{-1} \sum_s \frac{\lambda_s}{2}
\sum_{r=1}^{\lambda_s -1} \frac{p_r p_{\lambda
_s -r}}{p_{\lambda_s}} \Biggr).
\]
Observe that for $\theta> 1$, the first and third terms above
correspond to joining two cycles into one and splitting a cycle into
two cycles, respectively, whereas the middle term gives the holding
probability at $\lambda$. In other words, the probability of going
from $\lambda$ to $\mu$ in one step under the Jack--Metropolis walk
is given by the $p_\mu$ coefficient of $L_\theta^2 p_\lambda$. This
translates to $L_\theta^2 p_\lambda= \sum_{\mu\vdash N} T_\theta
(\lambda,\mu) p_\mu$.

Next we show that $L_\theta^2 J_\lambda= \beta_\lambda J_\lambda$,
with $\beta$ given by (\ref{Hanlonbeta}) (i.e., when $\theta>1$);
the general case follows by an appropriate affine transform. In \cite
{Macdonald}, page 317, it is shown that for the Macdonald
operator-valued generating function $D_n(X;q,t)$, eigenvalues are given
by $\beta_\lambda(X;q,t) = \prod_{i=1}^n(1+X t^{n-i} q^{\lambda
_i})$. Now using Example~3(c) on page 320, one can derive the
eigenvalues for $D_n(X;\alpha)$, by considering the limiting operator
$\lim_{t\to1} (t-1)^{-n} Y^n D_n(Y^{-1};q,t)$ where $Y = (t-1)X -1$.
Extracting the $X^{n-2}$ term gives $\beta_\lambda$, which is stated
in\vspace*{1pt} Example~3(b) of page 327 as $\alpha{N\choose 2} e_\lambda
(\alpha)$ ($\alpha$ is the same as $\theta$ in our notation), since\vspace*{1pt}
$\Box_n^\alpha= \alpha{N\choose 2} L_\alpha^2$.\vadjust{\goodbreak}

Finally we prove the formula for the left eigenfunctions. Define the
inner product $\langle\cdot, \cdot\rangle_\theta$ by $\langle p_\lambda,
p_\mu\rangle_\theta= \delta_{\lambda\mu} z_\lambda\theta^{\ell
(\lambda)}$. In \cite{StanleyJack} (see also Lemma~3.11 of \cite
{Hanlon}), it is shown that $\langle J_\lambda, J_\lambda\rangle
_\theta= j_\lambda(\theta)$ as defined before. Here recall the
normalization of~$J_\lambda$ is fixed by requiring that in the
monomial symmetric function basis, its $m_{1^N}$ coefficient be $1$.
Therefore expressing $J_\lambda$ in terms of $p_\mu$'s, we have
\[
\sum_{\rho\vdash N} c_{\lambda,\rho}^2
z_\rho\theta^{\ell(\rho
)} = j_\lambda.
\]
On the other hand, the normalization constant for the $\operatorname{MED}(\theta^{-1})$ distribution is $z_n(\theta^{-1}) = \theta^{-1}
(\theta^{-1} + 1) \cdots(\theta^{-1} + n-1) = \Pi\theta^{-n}$,
hence $\pi_\theta(\rho) = \theta^{-\ell(\rho)} \frac{n!}{z_\rho
} z_n(\theta)^{-1}$, and with $g_\lambda(\rho)$ given in (\ref{lefteigenfunction}) we have
\[
\sum_{\rho\vdash N} g_\lambda(\rho)^2
\pi_\theta(\rho)^{-1} = \sum_\rho
\frac{c_{\lambda,\rho}^2}{j_\lambda\Pi/ (\theta^n n!)} \Pi\theta^{\ell(\rho) - n} \frac{z_\rho}{n!} = \sum
_\rho\frac
{c_{\lambda,\rho}^2 \theta^{\ell(\rho)} z_\rho}{j_\lambda} =1,
\]
by the previous equation. This shows $g_\lambda$ are indeed left
eigenfunctions by Lemma~\ref{doublystochastic}.
%
\end{pf}

%
\begin{cor} \label{righteigenfunctions}
The right eigenfunctions of $P_\theta$ are proportional to
\[
f_\lambda(\rho) = g_\lambda(\rho) \theta^{\ell(\rho)}
z_\rho.
\]
\end{cor}

\begin{pf}
Since $\pi_\theta(\rho) \propto\theta^{-\ell(\rho)} \frac
{n!}{z_\rho}$, this follows from Lemma~\ref{doublystochastic} and
the previous theorem.
\end{pf}

%
\begin{lem} \label{1^n}
For any $\lambda\vdash n$,
\[
c_{\lambda,1^n} = 1.
\]
\end{lem}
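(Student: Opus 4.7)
The plan is to extract the coefficient of the monomial symmetric function $m_{1^n}$ on both sides of the defining expansion \eqref{Jack power}, $J_\lambda(\theta) = \sum_\rho c_{\lambda,\rho}(\theta) p_\rho$, and then invoke the standard integral-form normalization of $J_\lambda$.

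First I compute $[m_{1^n}] p_\rho$ for an arbitrary $\rho \vdash n$. Since $m_{1^n} = e_n$ is supported on squarefree monomials, for any symmetric polynomial $f$ one has $[m_{1^n}] f = [x_1 x_2 \cdots x_n] f$. Writing $p_\rho = \prod_k p_{\rho_k} = \prod_k \sum_i x_i^{\rho_k}$ and expanding, a squarefree monomial $x_1 x_2 \cdots x_n$ can arise only if each factor contributes a degree-one term, which forces $\rho_k = 1$ for every $k$. Hence $[m_{1^n}] p_\rho = 0$ unless $\rho = 1^n$, and the multinomial theorem applied to $p_1^n = (x_1 + x_2 + \cdots)^n$ gives $[x_1 x_2 \cdots x_n] p_1^n = n!$.

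Substituting into \eqref{Jack power} and extracting the $m_{1^n}$ coefficient collapses the entire sum on the right to a single term, yielding
\begin{align*}
[m_{1^n}] J_\lambda \;=\; c_{\lambda, 1^n} \cdot n!.
\end{align*}
The proof is completed by recalling that the integral-form Jack polynomial $J_\lambda$ is uniquely characterized by $[m_{1^n}] J_\lambda = n!$ for every $\lambda \vdash n$ (this is the normalization implicit in the inner-product formula $\langle J_\lambda, J_\lambda\rangle_\theta = j_\lambda$ used in Theorem~\ref{diagonalization}; cf.\ Macdonald VI). Dividing gives $c_{\lambda, 1^n} = 1$.

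There is no serious obstacle once one is careful with conventions: the lemma is essentially a restatement of the integral-form normalization of $J_\lambda$. One can sanity-check the computation in the smallest nontrivial case $n=2$, where a direct calculation gives $J_{(2)} = \theta \, p_2 + p_{1,1}$ and $J_{(1,1)} = p_{1,1} - p_2$, both of which have $p_{1^2}$ coefficient equal to $1$.
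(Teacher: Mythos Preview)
Your argument is correct, but it takes a different route from the paper's own proof. The paper appeals to Stanley's specialization formula $J_\lambda(1^N;\theta)=\prod_{(i,j)\in\lambda}(N-(i-1)+\theta(j-1))$ (Theorem~\ref{generating Jack}), evaluates both sides of $J_\lambda=\sum_\rho c_{\lambda,\rho}p_\rho$ at $x=1^N$ to get $\sum_\rho c_{\lambda,\rho}N^{\ell(\rho)}$, and then reads off the coefficient of the top power $N^n$: only $\rho=1^n$ contributes on the right, while the left-hand product is visibly monic in $N$, giving $c_{\lambda,1^n}=1$. Your proof instead extracts the $m_{1^n}$ coefficient directly from the power-sum expansion and invokes the integral-form normalization $[m_{1^n}]J_\lambda=n!$; this is shorter and avoids the Stanley formula entirely, essentially showing that $c_{\lambda,1^n}=1$ is \emph{equivalent} to the normalization rather than a consequence of a deeper identity. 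One caution: the paper's own wording in the proof of Theorem~\ref{diagonalization} says the $m_{1^N}$ coefficient is ``$1$'', which appears to be a slip for $n!$ (as confirmed by the discussion around Theorem~\ref{generating Jack}); your use of $n!$ is the correct value.
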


\begin{pf}
This follows from the following formula in \cite{StanleyJack}:
\[
J_\lambda\bigl(1^n; \theta\bigr) = \prod
_{(i,j) \in\lambda} \bigl(n -(i-1) + \theta(j-1)\bigr),
\]
true for all $n \in\mathbb{N}$, by reading coefficients of powers of
$n$; See \cite{DiaconisHanlon}, Section~4, Theorem~1.
\end{pf}

%
\begin{lem} \label{jlambdasplit}
$j_\lambda$ admits the following inductive bound on the parts of
$\lambda$:
\[
j_\lambda\ge\lambda_1!^2 \theta^{2 \lambda_1 -1}
\lambda _1^{\theta^{-1} - 1} e^{-\pi^2/ 12 \theta^2} j_{(\lambda_2, \ldots, \lambda_n)}.
\]
\end{lem}

Note that the constant $e^{-\pi^2/12 \theta^2}$ is not important.\vadjust{\goodbreak}

\begin{pf*}{Proof of Lemma \ref{jlambdasplit}}
From the definition, we have
%
\begin{eqnarray}\label{jlambdabound}
j_\lambda&\ge&\Biggl[\prod_{i=1}^{\lambda_1}
(i \theta) \prod_{i=1}^{\lambda_1 -1} (i\theta+ 1)
\Biggr] j_{(\lambda_2, \ldots, \lambda
_n)}\nonumber
\\
&=& \Biggl[\prod_{i=1}^{\lambda_1}
(i\theta) \prod_{i=1}^{\lambda_1
-1} (i \theta)
\bigl(1 + (i \theta)^{-1}\bigr)\Biggr] j_{(\lambda_2, \ldots,
\lambda_n)}
\nonumber\\[-8pt]\\[-8pt]
&\ge&\lambda_1 ! \theta^{\lambda_1} (\lambda_1 -1)!
\theta ^{\lambda_1 -1} \exp\biggl(\frac{1}{\theta} \log\lambda_1 -
\frac
{1}{\theta^2} \frac{\pi^2}{12}\biggr) j_{(\lambda_2, \ldots, \lambda
_n)}
\nonumber
\\
&=& \lambda_1 ! \theta^{\lambda_1} (\lambda_1 -1)!
\theta^{\lambda
_1 -1} \lambda_1^{1/\theta} \exp\biggl(-
\frac{1}{\theta^2} \frac{\pi
^2}{12}\biggr)j_{(\lambda_2, \ldots, \lambda_n)},\nonumber
\end{eqnarray}
where we used the fact that $1+x \ge e^{x - x^2/2}$ for $x \ge0$,
applied to $x = (i \theta)^{-1}$, and the zeta sum,
\[
\sum_i \frac{1}{2i^2} \le\frac{\pi^2}{12}.
\]\upqed
\end{pf*}

\section{Mixing time upper bound}
By Theorem~\ref{diagonalization}, under the same notation there, \emph
{four times} the total variation distance of $P_{1^n}^k$ from $\pi$
can be bounded by
\begin{eqnarray*}
&&\bigl\llVert P_x^k - \pi\bigr\rrVert
_2^2 \le\frac{1}{\pi^2(x)} \sum
_\lambda\beta_\lambda^{2k}
g_\lambda^2\bigl(1^n\bigr) -1,
\end{eqnarray*}
where we use the sloppy (but standard) notation $\llVert P_x^k -\pi
\rrVert _2$ to mean $\llVert \frac{P_x^k}{\pi}-1\rrVert _{\mathcal
{L}^2(\pi)}$.

For $\lambda= (n)$, corresponding to the trivial representation on
$S_n$, and starting point $x = (1^n)$, the summand exactly cancels
$-1$: $\beta_{(n)} = 1$, $j_{(n)} = \Pi\theta^n n!$ and $c_{(n),1^n}
=1$ (by Lemma~\ref{1^n}), whereas $\pi(1^n) = \Pi^{-1}$, so
\begin{eqnarray*}
&&\frac{1}{\pi(1^n)^2} \beta_{(n)}^{2k} g_{(n)}^2
\bigl(1^n\bigr) = 1.
\end{eqnarray*}

Thus using the explicit formula for $g_\lambda$, we immediately have
%
\begin{equation}
\label{L^2bound} \bigl\llVert P_x^k - \pi\bigr\rrVert
_2^2 = \theta^n n! \Pi_n \sum
_{\lambda
\vdash n, \lambda\neq(n)} \frac{\beta_\lambda^{2k}}{j_\lambda}.
\end{equation}
We now break the sum according to the sign of $\beta_\lambda$,
%
\begin{equation}
\label{breaksum} \sum_{\lambda\vdash n, \lambda\neq(n)} \frac{\beta_\lambda
^{2k}}{j_\lambda} = \sum
_{\beta_\lambda\ge0} ^* \frac
{\beta_\lambda^{2k}}{j_\lambda} + \sum
_{\beta_\lambda<0} \frac
{\beta_\lambda^{2k}}{j_\lambda},
\end{equation}
where $\sum^*$ denotes summation skipping the top eigenvalue indexed
by $\lambda= (n)$. Next we can rewrite the first summand on the right
according to the size of $\lambda_1$, and obtain the following bound:
\begin{eqnarray} \label{positivebeta}
\theta^n n! \Pi_n \sum_{\beta_\lambda\ge0}
^* \frac{\beta_\lambda^{2k}}{j_\lambda}\nonumber
&=& \sum_{s=n-1}^1
\sum_{\lambda\dvtx
\lambda_1 =s, \beta_\lambda\ge0} \theta^n n! \Pi_n
\frac{\beta
_\lambda^{2k}}{j_\lambda}
\nonumber\\[-8pt]\\[-8pt]
&\le& \sum_{s=n-1}^1 \max\bigl\{
\beta_\lambda^{2k}\dvtx  \beta_\lambda\ge0,
\lambda_1 =s\bigr\} \sum_{\lambda\dvtx  \lambda_1 = s}
\frac{\theta^n n!
\Pi_n}{j_\lambda}.\nonumber
\end{eqnarray}

Splitting $\Pi= \Pi_{n,\theta}$ into two subproducts, and using
Lemma~\ref{jlambdasplit}, we have
\begin{eqnarray*}
\frac{\Pi n! \theta^n}{j_\lambda} &\le&\frac{({\Pi_n}/{(\Pi_{n-\lambda_1})})
({n!}/({(n-\lambda_1)!})) \theta^{\lambda
_1}}{\theta^{2\lambda_1 -1 } \lambda_1!^2 \lambda_1^{\theta
^{-1}-1} e^{-\pi^2/12 \theta^2}} \frac{\Pi_{n-\lambda_1}
(n-\lambda_1)! \theta^{n- \lambda_1}}{j_{(\lambda_2,\ldots,\lambda_n)}},
\end{eqnarray*}
where\vspace*{1pt} the second quotient factor happens to be $\frac{1}{\pi
(1^{n-\lambda_1})^2} g_{(\lambda_2, \ldots, \lambda
_n)}^2(1^{n-\lambda_1})$; see Theorem~\ref{diagonalization} and
Lemma~\ref{1^n}. Also denote the first factor by $q_{n, \lambda_1}$.

By (\ref{leftsum}), the definition of $\Pi_{n-s}:= \pi
(1^{n-s})^{-1}$ (see Definition~\ref{definition1}), and the fact $\Pi
_n = \theta^n (n-1)! e^{\theta^{-1}\sum_{i=2}^n {1}/({i-1})}$, we
can bound the summand of the right-hand side of (\ref{positivebeta}) for a fixed
$s$ as
\begin{eqnarray*}
\sum_{\lambda\dvtx  \lambda_1 = s} \frac{\Pi_n n! \theta^n}{j_\lambda
} &\le&
q_{n,s} \sum_{\mu\vdash n-s} \frac{1}{\pi(1^{n-s})^2}
g_\mu ^2\bigl(1^{n-s}\bigr)
=  q_{n,s}
\frac{1}{\pi(1^{n-s})}
\\
& \le&\biggl(\frac
{n}{s}\biggr)^{\theta^{-1} -1}
\theta^{n-s+1} \biggl(\frac{n!}{s!}\biggr)^2
\frac
{e^{\pi^2/12\theta^2}}{(n-s)!}.
\end{eqnarray*}

We will now reduce the $\mathcal{L}^2$ bound (\ref{L^2bound}) to
bounding the following quantity:
%
\begin{equation}
\label{corebound} b_{n,+}:=\sum_{s=n-1}^1
\beta_{s,+}^{2k} \biggl(\frac{n}{s}
\biggr)^{\theta^{-1}
-1} \theta^{n-s+1} \biggl(\frac{n!}{s!}
\biggr)^2 \frac{e^{\pi^2/12\theta^2}}{(n-s)!},
\end{equation}
where $\beta_{s,+}:= \max\{\beta_\lambda\dvtx  \beta_\lambda\ge0,
\lambda_1 =s\}$.

For the second summand of (\ref{breaksum}), we obtain
%
\begin{eqnarray}
\label{secondsum} \sum_{\beta_\lambda<0} \frac{\beta_\lambda^{2k}}{j_\lambda} &\le&
\sum_{s=n-1}^1 \max\bigl\{
\beta_\lambda^{2k}\dvtx  \beta_\lambda< 0,
\lambda^t_1 =s\bigr\} \sum_{\lambda\dvtx  \lambda^t_1 = s}
\frac{\theta^n
n! \Pi_n}{j_\lambda}
\nonumber\\[-8pt]\\[-8pt]
&&{} + \beta_{1^n}^{2k} \frac{\theta^n n! \Pi_n}{j_{1^n}}.\nonumber
\end{eqnarray}
Using the explicit formula (\ref{betalambda}) for $\beta_\lambda$,
we get
\begin{eqnarray*}
\beta_{1^n} &=& 1 - (\theta\wedge1) - \bigl(\theta^{-1} \wedge1
\bigr) \in(-1,1),
\end{eqnarray*}
for $\theta\neq1$. On the other hand,
\begin{eqnarray*}
\Pi_n n! \theta^n / j_{1^n} &=& \prod
_{i=1}^{n-1} (1 + i\theta) n! \theta^n \Big/
\prod_{i=1}^n (i \theta) \bigl( 1 + (i-1)
\theta\bigr)=1,
\end{eqnarray*}
since $j_{1^n} = \prod_{i=1}^n (i \theta) ( 1 + (i-1) \theta)$ by definition.
Thus
\begin{eqnarray*}
\beta_{1^n}^{\Omega(n)} \Pi_n n! \theta^n /
j_{1^n} &=& o(1),
\end{eqnarray*}
which has negligible contribution in (\ref{L^2bound}).

For the remaining terms in (\ref{secondsum}), first observe that
\begin{eqnarray*}
j_{\lambda^t} &\ge&\prod_{i=1}^{\lambda_1} i
\prod_{i=1}^{\lambda
_1 -1} (i + \theta)
j_{(\lambda_2, \ldots, \lambda_n)^t}.
\end{eqnarray*}
Hence when $\theta< 1$,
\begin{eqnarray*}
j_{\lambda^t} &\ge&\prod_{i=1}^{\lambda_1} (i
\theta) \prod_{i=1}^{\lambda_1 -1} (i\theta+ 1)
j_{(\lambda_2, \ldots, \lambda_n)^t}
\end{eqnarray*}
and using the bound $|\beta_\lambda| \le\beta_{\lambda^t}$, for
$\beta_\lambda< 0$, we get
\begin{eqnarray*}
&&\sum_{\beta_\lambda<0} \frac{\beta_\lambda^{2k}}{j_\lambda} \le \sum^*
_{\beta_\lambda\ge0} \frac{\beta_\lambda
^{2k}}{j_\lambda} + o(1).
\end{eqnarray*}
If $\theta> 1$, the $j_{\lambda^t}$ is comparable to $j_\lambda$
within an exponential factor
\[
j_{\lambda^t} \ge j_\lambda\theta^{2n}.
\]
Furthermore by the explicit formula of $\beta_\lambda$, we have
\begin{eqnarray*}
-\beta_\lambda(\theta) &=& -\biggl(\frac{n(\lambda^t)}{{n\choose 2}} -
\theta^{-1} \frac{n(\lambda)}{{n\choose 2}}\biggr) \le\theta^{-1}
\frac{n(\lambda)}{{n\choose 2}} - \frac{n(\lambda^t)}{
{n\choose 2}}
\\
& \le& \theta^{-1}\biggl(
\frac{n(\lambda) - n(\lambda^t)}{
{n\choose 2}}\biggr)
\le - \theta^{-1}\beta_\lambda(1).
\end{eqnarray*}
So since $k:= \frac{1}{2(\theta\wedge1)} n (c + \log n) = \Theta(n
\log n)$, $\theta^{-2k} \theta^{2n} = o(1)$. Thus we can still
compare the negative $\beta_\lambda$ sum to the positive one,
\begin{eqnarray*}
&&\sum_{\beta_\lambda<0} \frac{\beta_\lambda^{2k}}{j_\lambda} \le
b_{n,+}+ o(1).
\end{eqnarray*}

It remains to bound $2 b_{n,+}$. First note that the factor $2$ in
front is immaterial, since for $\lambda\neq(n)$,
\[
\beta_\lambda^{c n} < \beta_{(n-1,1)}^{cn} \le
e^{-\Omega_\theta(c)},
\]
thanks\vspace*{1pt} to the monotonicity of $\beta_\lambda$'s. So by increasing $c$
in $k= \frac{1}{2 (\theta\wedge1)} n (c + \log n)$, we can decrease
$b_{n,+}$ by a factor of $2$. The factor $c_\theta= e^{\pi^2/2 \theta
^2}$ can be ignored similarly. We can also get rid of the factor
$(\frac{n}{s})^{\theta^{-1}-1}$ in (\ref{corebound}) as follows.

For $s \ge n/2$, $(\frac{n}{s})^{\theta^{-1}-1} = \mathcal{O}_\theta
(1)$, so again increasing $c$ annihilates it. For $s < n/2$, recall the
second bound on $\beta_\lambda$ (\ref{generallambda}), which
implies that for $\lambda_1 < n/2$, $\beta_\lambda$ is bounded away
from $1$ uniformly in $n$. Now in the definition of $b_{n,+}$, $\beta
_\lambda$ is assumed to be nonnegative (alternatively, $\beta_{1^n}$
is bounded uniformly away from $-1$), hence raising $\beta_\lambda$
to the power $\Omega(n)$ easily cancels any power of $n$, that is,
\[
n^{\theta^{-1} -1} \beta_\lambda^{cn} = o(1).
\]
So together, by increasing $c$, we can reduce the problem to bounding
the following quantity:
\[
B_{n,+}:=\sum_{s=n-1}^1
\frac{\theta^{n-s+1}}{(n-s)!} \biggl(\frac
{n!}{s!}\biggr)^2
\beta_{s,+}^{2k}.
\]

The only estimates we rely on now are (\ref{biglambda1}) and (\ref{generallambda}) from Lemma~\ref{technical}; the idea will be similar
to \cite{DiaconisShahshahani}; see also \cite{PD}. First note that it
suffices to show
%
\begin{equation}
\label{individualterm} \frac{\theta^{n-s+1}}{(n-s)!} \biggl(\frac{n!}{s!}\biggr)^2
\beta_{s,+}^{2k} = \mathcal{O}(1),
\end{equation}
uniformly for all $s \in[1,n-1]$ and $c$ sufficiently large. Indeed
using (\ref{generallambda}), we have
\[
\beta_{s,+} \le e^{-x - x^2/2} \le e^{-x}
\]
for $x = (\theta\wedge1) \frac{n-s}{n}$. Therefore
\[
\mathcal{O}(1) \sum_{s=n-1}^1
\beta_{s,+}^{(cn)/(2(1\wedge
\theta))} \le\mathcal{O}(1) \sum
_{t=1}^{n} e^{-tc} = o_c(1),
\]
by geometric summation; in fact, using (\ref{biglambda1}) we can get
a better bound, but that's not necessary.

Next recall (\ref{biglambda1}) as well as the estimates (no Stirling
formula needed)
\begin{eqnarray*}
\frac{n!}{s!} &\le& e^{\int_s^n \log x \,dx + \log n - \log s} = e^{n
\log n - s \log s -(n-s) + \log n - \log s},
\\
(n-s)! &\ge& e^{\int_1^{n-s} \log x \,dx} = e^{(n-s) \log(n-s) - (n-s -1)}.
\end{eqnarray*}
Taking logarithm, and letting $s = \alpha n$, we can bound the
left-hand side of (\ref{individualterm}) by
%
\begin{eqnarray}\label{alphaexponent}
&& \log\biggl[ \frac{\theta^{n-s+1}}{(n-s)!} \biggl(\frac{n!}{s!}\biggr)^2
\beta _{s,+}^{2k}\biggr]\nonumber
\\
&&\qquad \le(1-2 \alpha) (1-\alpha) n \log n
- (1-\alpha)n \log (1-\alpha)
\\
&&\quad\qquad{}- 2 \alpha n \log\alpha- 2 \log\alpha+ \bigl(C_1(\theta) -2 c \alpha\bigr) (1-\alpha) n +
C_2(\theta),\nonumber
\end{eqnarray}
where $C_1(\theta), C_2(\theta)$ are constants that depend only on
$\theta$.

For $\alpha\ge\alpha_0 \in(1/2,1)$, the right-hand side of (\ref
{alphaexponent}) can be further simplified to
\[
(1-\alpha) n\bigl[ (1- 2\alpha) \log n - \log(1-\alpha) +C_1'(
\theta) -c \bigr] + C_2'(\theta).
\]
Since $(1-\alpha)n \ge1$, and we can choose $c$ as large as we want,
it suffices to show the expression inside the square brackets above is
$\mathcal{O}(1)$. But when $\alpha= 1-1/n$  or~$1/2$, this is clearly
true. Furthermore, the derivative
\[
\frac{d}{d\alpha} \bigl[(1-2 \alpha) \log n - \log(1-\alpha)\bigr] = -2 \log n
+ \frac{1}{1-\alpha}
\]
is monotone increasing, showing that the function $\alpha\mapsto(1- 2
\alpha) \log n - \log(1-\alpha)$ is convex, and its value for any
$\alpha\in[1/2, 1-1/n]$ is bounded above by the values at the
boundary points.

Next let $\alpha< \alpha_0$. Using the second bound for $\beta
_\lambda$, (\ref{generallambda}), we have
\[
\beta_\lambda\le e^{-(\theta\wedge1) (1- ({s}/{n}))}.
\]
%
Then the logarithm of the left-hand side of (\ref{individualterm}) is
bounded by
%
\begin{eqnarray}\label{smallalphaexponent}
&& (1- \alpha) n \log n + (1-\alpha) \bigl(\log\theta- \log(1-\alpha) +1\bigr) n
- 2 \alpha n \log\alpha
\nonumber
\\
&&\quad{} + 2 \log\alpha- (1-\alpha) n \log n - (1-\alpha)c n
\\
&&\qquad \le(1-\alpha) \bigl(C(
\theta) - c\bigr) n + 2 (1- \alpha n) \log\alpha.\nonumber
\end{eqnarray}
Clearly $(1 -\alpha n)\log\alpha= \mathcal{O}(n)$ for $\alpha\in
[\frac{1}{n}, \alpha_0]$. So for sufficiently large $c$, the
right-hand side above goes to $-\infty$. Together this shows (\ref
{individualterm}) is true for all $s \in[\frac{1}{n}, 1 - \frac
{1}{n}]$, and concludes the upper bound for the mixing time.
%
%
%

\section{Mixing time lower bound}
We rely heavily on results from \cite{StanleyJack}.
Again we collect some notation needed in the analysis below:

\begin{defn}
For $\lambda, \rho\vdash n$, let:
\begin{itemize}
\item$H_\lambda$ be the product of all hook-lengths of the Ferrers
diagram for $\lambda$;
\item$z_\rho:= \prod_{i=1}^n i^{m_i} m_i!$ and $m_i =m_i(\rho)$ is
the number of parts in $\rho$ of length $i$;
\item$\chi_\lambda(\rho)$ be the character of $S_n$ indexed by
$\lambda$ evaluated at an element of conjugacy class $\rho$;
alternatively, they can be defined by the system
\[
p_\rho= \sum_\lambda\chi_\lambda(
\rho) s_\lambda,
\]
where $s_\rho$ are the Schur polynomials.
\end{itemize}
\end{defn}

\emph{Warning}. Note the Schur polynomials are not direct specializations of
the Jack polynomials; they differ by a factor
%
\begin{equation}
\label{SchurJack} s_\lambda= H_\lambda^{-1}
J_\lambda(1).
\end{equation}
Thus the matrix $\chi_\lambda(\rho)$ is the inverse of $H_\lambda
^{-1} c_{\lambda,\rho}(1)$.

%
\begin{lem}[(\cite{Macdonald}, Chapter I, equation (7.5); see also \cite{StanleyJack}, equation (50))]\label{cchirelation}
The re\-lation between $c_{\lambda,\rho}(1)$ and $\chi_\lambda(\rho
)$ is given by
\begin{eqnarray*}
c_{\lambda,\rho}(1) &=& H_\lambda z_\rho^{-1}
\chi_\lambda(\rho).
\end{eqnarray*}
\end{lem}

%
\begin{cor} \label{Schurpower}
The inverse matrix to $\chi_\lambda(\rho)$ is given by $\chi
_\lambda(\rho) z_\rho^{-1}$, that is,
\begin{eqnarray*}
s_\lambda&=& \sum_\rho\chi_\lambda(
\rho) z_\rho^{-1} p_\rho.
\end{eqnarray*}
\end{cor}

\begin{pf}
By relation (\ref{SchurJack}) and the lemma above, we have
\begin{eqnarray*}
s_\lambda&=& H_\lambda^{-1} J_\lambda(1)
\\
&=& H_\lambda^{-1} \sum_\rho
H_\lambda z_\rho^{-1} \chi_\lambda (\rho)
p_\rho.
\end{eqnarray*}
Comparing the coefficients with (\ref{Jackpower}) in Definition~\ref{definition1} yields the result.
\end{pf}

As in the $\theta=1$ case studied by Diaconis and Shahshahani, the
strategy will be to use a certain eigenfunction $f$ of the chain as
test function and compare the probabilities of the event $\{f < \eta\}
$ for some $\eta\in\mathbb{R}$ under the stationary distribution and
the distribution at time slightly before the mixing time, which in our
case is $k(c):= \frac{1}{2 (\theta\wedge1)}n (\log n -c)$.

In the case where $\theta=1$, $\rho\mapsto\chi_{(n-1,1)}(\rho) =
m_1(\rho) -1$ is the desired eigenfunction. So it is natural to guess
that a suitable affine transformation of the fixed-point (aka
$1$-cycle) counting function $\rho\mapsto m_1(\rho)$ is the desired
eigenfunction.

Lemma~\ref{cchirelation} shows that the following normalized version
of $c_{\lambda, \rho}$ is the right analogue of characters of the
symmetric group
%
\begin{equation}
\label{dlambda,rho} d_{\lambda, \rho}(\theta):= c_{\lambda, \rho}(\theta)
z_\rho \theta^{-(n-\ell(\rho))} H_{\lambda}^{-1}.
\end{equation}

Thus our candidate test function will be $d_\lambda(\rho) =
d_{\lambda,\rho}$.\vadjust{\goodbreak}

It is straightforward to compute $\mathbb{P}_\infty(d_\lambda< \eta
)$ where $\mathbb{P}_\infty$ denotes the stationary measure; the
number of cycles are asymptotically independent and Poisson
distributed. To estimate $\mathbb{P}_k(d_\lambda< \eta)$, one uses
the second moment method. The first moment of $d_{(n-1,1)}$ is easily
computed since it is proportional to the right eigenfunctions of the
chain $P_\theta$; see Corollary~\ref{righteigenfunctions}. For
second moments, we need to decompose $d_{(n-1,1)}^2$ as\vspace*{1pt} linear
combinations of other $d_\lambda$'s. This is accomplished by first
expressing $d_{(n-1,1)}$ and other $d_\lambda$'s in terms of powers of
$m_i$'s, the number of $i$-cycles (not to be confused with monomial
symmetric functions), then deducing the relationship by solving the
appropriate system of linear equations. The analysis below will be an
elaboration of this strategy.

First we need:

\begin{prop}[(\cite{StanleyJack}, Proposition~7.5)]
Let $d_{\lambda,\rho} = d_{\lambda,\rho}(\theta)$ be defined as
above. Then
\begin{eqnarray*}
&& \theta^{k+1} (k+n) d_{(n, 1^k),\rho} (\theta)
\\
&&\qquad = \sum
_{j=0}^k (-1)^{k-j} \bigl(j + (n+k-j)
\theta\bigr) \sum_{\nu\vdash j} \biggl[\prod
_i \pmatrix{m_i(\rho)
\cr
m_i(\nu)}
\biggr] (-1)^{j-\ell(\nu)} \theta^{\ell
(\nu)}.
\end{eqnarray*}
\end{prop}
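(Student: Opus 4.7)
The statement expresses $d_{(n,1^k),\rho}(\theta)$, which by definition is a rescaling of the coefficient of $p_\rho$ in the hook-shape Jack polynomial $J_{(n,1^k)}(\theta)$. My plan is to prove it on the symmetric-function side, by induction on the leg length $k$, and then read off the coefficient using the definition of $d_{\lambda,\rho}$ and Lemma~\ref{c chi relation}.

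The main tool is the Pieri rule for Jack polynomials: multiplication by $p_1$ expands as $p_1 \cdot J_\mu(\theta) = \sum_{\mu^+} \psi^+_{\mu^+/\mu}(\theta)\, J_{\mu^+}(\theta)$, where $\mu^+$ ranges over one-box enlargements of $\mu$ and the coefficients $\psi^+$ have explicit product formulas in terms of the arm/leg data $h^*(s), h_*(s)$ from Definition~\ref{definition 1}. For $\mu=(n,1^k)$, the one-box enlargements are exactly $(n+1,1^k)$, $(n,1^{k+1})$, and $(n,2,1^{k-1})$. Using this together with the orthogonality $\langle J_\lambda, J_\mu\rangle_\theta = j_\lambda\,\delta_{\lambda\mu}$ (which isolates the hook contributions after pairing against $p_\rho$), one obtains a two-term recursion in $k$ for $c_{(n,1^k),\rho}(\theta)$.

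I would then verify that the right-hand side of the stated identity satisfies the same recursion under $k \mapsto k+1$. The inner signed sum $\sum_{\nu \vdash j}\bigl[\prod_i \binom{m_i(\rho)}{m_i(\nu)}\bigr](-1)^{j-\ell(\nu)}\theta^{\ell(\nu)}$ admits a transparent combinatorial interpretation: it is a signed count of ways to select a sub-multiset of cycles $\nu$ from the cycle-multiset of $\rho$, with the binomial product counting these selections. This structure satisfies a Pascal-like recursion when $j$ or $k$ is incremented, and the outer weighting $(-1)^{k-j}(j+(n+k-j)\theta)$, linear in the pair $(n,k)$, matches the linear structure of the Pieri coefficients for the leg-extension move $(n,1^k)\to(n,1^{k+1})$.

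The base case $k=0$ reduces to the power-sum expansion of $J_{(n)}(\theta)$, whose closed form is derivable from the Pieri rule applied iteratively to $J_{(1)}=p_1$. The main obstacle is the careful bookkeeping: one must check that the global prefactor $\theta^{k+1}(k+n)$ on the left matches the accumulated Pieri coefficients, the generalized hook normalizations $j_{(n,1^k)}(\theta)$ and $H_{(n,1^k)} = (n+k)(n-1)!\,k!$, and the alternating signs on the right, once the induction closes. Since the identity is classical (\cite{StanleyJack}, Proposition~7.5), I would follow Stanley's streamlining, which introduces an auxiliary generating function in an extra variable $t$ and checks that it satisfies a first-order $t$-recursion with the correct initial data, thereby bypassing the most delicate sign tracking.
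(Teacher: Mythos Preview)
The paper does not give its own proof of this proposition; it is quoted verbatim from \cite{StanleyJack} and used as a black box to derive the explicit formulae \eqref{d{(n-1,1),rho}} and \eqref{d{(n-2,1^2),rho}}. So there is no in-paper argument to compare against.

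On the merits of your outline: the Pieri strategy is natural, but as written it has a gap. When you add one box to the hook $(n,1^k)$ you obtain three shapes, and one of them, $(n,2,1^{k-1})$, is \emph{not} a hook. Orthogonality of the $J_\lambda$ does not make that term vanish when you pair against $p_\rho$; you simply pick up $c_{(n,2,1^{k-1}),\rho}$, for which you have no inductive control. So the recursion does not close among hooks and you do not get the ``two-term recursion in $k$'' you claim.

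The easy fix is to run Pieri in the other direction: apply the adjoint $\theta\,\partial/\partial p_1$ (i.e.\ remove a box) rather than multiply by $p_1$. Removing a box from $(n,1^k)$ yields only $(n-1,1^k)$ and $(n,1^{k-1})$, both hooks, and on the power-sum side $\partial_{p_1} p_\rho = m_1(\rho)\, p_{\rho\setminus(1)}$. Now the recursion genuinely closes and the induction you sketch goes through. Your base case $k=0$ (which is \eqref{c{(n),rho}}) and your combinatorial reading of the inner sum over $\nu\vdash j$ as a selection of a sub-multiset of cycles of $\rho$ are both correct and are exactly what one needs to match the recursion on the right-hand side.
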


Note that the partitions in the proposition above is for $n+k$, rather
than $n$.

From this, we easily obtain
%
\begin{equation}
\label{d{n-1,1,rho}} d_{(n-1,1),\rho}(\theta) = -\frac{1}{\theta} +
\frac{1 +
(n-1)\theta}{\theta n} m_1(\rho)
\end{equation}
and
%
\begin{eqnarray} \label{d{n-2,1^2,rho}}
d_{(n-2,1^2),\rho} (\theta) &=& \frac{1}{\theta^2} - \biggl(\frac
{1+(n-1)\theta}{n \theta^2} +
\frac{2 + (n-2)\theta}{2\theta n}\biggr) m_1(\rho)
\nonumber\\[-8pt]\\[-8pt]
&&{} + \frac{2+(n-2)\theta}{2 \theta n} m_1(\rho)^2 - \frac{2 +
(n-2)\theta}{\theta^2 n}
m_2(\rho).\nonumber
\end{eqnarray}

Note in particular,
%
\begin{eqnarray}
\chi_{(n-1,1)}(\rho) &=& m_1(\rho) -1,
\\
\chi_{(n-2,1^2)}(\rho) &=& 1 - \tfrac{3}{2} m_1(\rho) +
\tfrac{1}{2} m_1(\rho)^2 - m_2(\rho),
\end{eqnarray}
as expected.

Using the Schur--Weyl relation
\begin{eqnarray*}
\chi_{n-1,1}^2 &=& \chi_n + \chi_{n-1,1} +
\chi_{n-2,2} + \chi_{n-2,1^2},
\end{eqnarray*}
and we also obtain
\[
\chi_{(n-2,2)} = \tfrac{1}{2} m_1^2 -
\tfrac{3}{2} m_1 + m_2.
\]

To get $J_{(n-2,2)}$, we need the conjecture right after Proposition~7.2 as well as Corollary~3.5 from \cite{StanleyJack}. The conjecture
has been proved in \cite{Koike}. Notice the parameter $\alpha$ is the
same as our parameter $\theta$.

%
\begin{prop}[(\cite{StanleyJack}, Proposition~7.2)]
The Jack polynomials corresponding to the partition $(2^i, 1^j)$ have
the following expansion in terms of the monomial symmetric basis
$m_\lambda$:
\begin{eqnarray*}
J_{(2^i,1^j)} &=& \sum_{r=0}^i
(i)_r (\theta+ i + j)_r \bigl(2(i-r) + j\bigr)!
m_{(2^r, 1^{2(i-r) + j})},
\end{eqnarray*}
where $(i)_r:= i(i-1) \cdots(i-r+1)$.
\end{prop}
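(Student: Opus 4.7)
The plan is to verify the identity via the standard characterization of Jack polynomials as the unique (up to scalar) eigenfunctions of the Laplace--Beltrami operator $L_\theta^2$ from \eqref{second order Laplace-Beltrami} that are upper-triangular in the monomial basis with respect to dominance order. Set $n = 2i+j$ and call the right-hand side $F_{i,j}$. Upper-triangularity is immediate: every shape $(2^r, 1^{2(i-r)+j})$ with $0 \le r \le i$ is dominated by $(2^i, 1^j)$, strictly when $r < i$. The normalization is also immediate: the $r=0$ coefficient equals $n!$, which matches the coefficient of $m_{(1^n)}$ in $J_{(2^i,1^j)}$, since Lemma~\ref{1^n} gives $c_{\lambda, 1^n}(\theta) = 1$ and the only power sum contributing to $m_{(1^n)}$ is $p_1^n$, with coefficient $n!$.

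The substantive step is to verify $L_\theta^2 F_{i,j} = \beta_{(2^i,1^j)} F_{i,j}$. I would first observe that $L_\theta^2$ preserves the finite-dimensional subspace $W_n := \mathrm{span}\{m_{(2^r, 1^{n-2r})} : 0 \le r \le \lfloor n/2 \rfloor\}$, because every $\mu \le (2^r, 1^{n-2r})$ in dominance has all parts at most $2$, so each Jack polynomial $J_{(2^r, 1^{n-2r})}$ lies in $W_n$ and they furnish an eigenbasis of $W_n$ for $L_\theta^2$. Then I would compute the matrix of $L_\theta^2|_{W_n}$ in the monomial basis by expanding each $m_{(2^r, 1^{n-2r})}$ in the power-sum basis, applying the explicit formula for $L_\theta^2 p_\rho$ displayed earlier in the paper, and re-expanding the result into monomials. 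The hypergeometric identity $(i)_{r+1}(\theta+i+j)_{r+1} = (i)_r(\theta+i+j)_r (i-r)(\theta+i+j+r)$ satisfied by the claimed coefficients should then close up the resulting finite recursion between consecutive values of $r$.

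The main obstacle is the combinatorial bookkeeping for the change of basis between monomials and power sums on shapes of the form $(2^r, 1^s)$, together with the cancellations that return $L_\theta^2 m_{(2^r, 1^{n-2r})}$ to $W_n$: the power-sum output of $L_\theta^2 p_\rho$ for $\rho \in W_n$ typically contains shapes like $(3, 2^{r-1}, 1^s)$ that lie outside $W_n$ as power-sum indices, so the reduction back to $W_n$ in the monomial basis relies on non-trivial cancellations. A cleaner alternative I would try in parallel is to apply Macdonald's Jack involution sending $J_\lambda(\theta)$ to a scalar multiple of $J_{\lambda'}(1/\theta)$: since $(2^i, 1^j)' = (i+j, i)$ is a two-row shape, and two-row Jack polynomials admit a classical Gauss-hypergeometric explicit formula, transporting that formula through the involution (while tracking its action on monomial symmetric functions) should directly yield the claimed expansion.
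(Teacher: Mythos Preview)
The paper does not prove this proposition; it simply quotes it from \cite{StanleyJack} as an external input, so there is no in-paper argument to compare against.

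Your strategy is sound in principle: Jack polynomials are characterized by (i) upper-triangularity in the monomial basis with respect to dominance, (ii) being an eigenfunction of $L_\theta^2$, and (iii) a normalization, and you have correctly checked (i) and (iii). The issue is that you have not actually executed (ii). Saying the hypergeometric shift identity ``should then close up the resulting finite recursion'' is not a computation; the bookkeeping you yourself identify as the main obstacle is essentially the entire content of a proof along these lines. To make this route go through you would need the explicit matrix entries of $L_\theta^2$ restricted to $W_n$ in the monomial basis---these are known (see e.g.\ \cite{Macdonald} VI (4.5) or the Pieri-type formulas for Jack polynomials), and once written down the recursion between the coefficient at level $r$ and at level $r-1$ does reduce to a one-line check. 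But as written your proposal stops short of that check.

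Your alternative via the duality $\omega_\theta$ sending $J_\lambda(\theta)$ to a scalar multiple of $J_{\lambda'}(1/\theta)$ is closer in spirit to Stanley's own treatment (he handles two-row and two-column shapes in tandem). One caution if you pursue it: $\omega_\theta$ does not map monomial symmetric functions to monomial symmetric functions (it maps $m_\lambda$ to the ``forgotten'' symmetric functions $f_\lambda$ up to sign), so transporting a monomial expansion of $J_{(i+j,i)}$ through $\omega_\theta$ does not immediately yield a monomial expansion of $J_{(2^i,1^j)}$ without an additional change-of-basis step.
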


%
\begin{prop}[(\cite{Koike}, Theorem 1.1)]
In terms of Schur polynomial basis, we have
\begin{eqnarray*}
J_{(2^i, 1^j)} &=& \sum_{r=0}^i
(i)_r (\theta+ i + j)_r (i - r - \theta )
_{i-r} (i+j-r)! s_{(2^r, 1^{2(i-r)+j})}.
\end{eqnarray*}
\end{prop}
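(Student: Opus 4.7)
\emph{Proof Plan.} My plan is to reduce the claimed Schur-basis expansion to the monomial-basis expansion in the preceding Proposition 7.2 by expanding each target Schur polynomial in the monomial basis and matching coefficients. For the shape $(2^r, 1^s)$, whose conjugate is the two-row partition $(r+s, r)$, the dual Jacobi-Trudi identity gives
\begin{align*}
s_{(2^r, 1^s)} = e_r e_{r+s} - e_{r-1} e_{r+s+1},
\end{align*}
while a direct counting argument on products of elementary symmetric functions yields $e_a e_b = \sum_k \binom{a+b-2k}{a-k} m_{(2^k, 1^{a+b-2k})}$ for $a \le b$. Combining these, with $s = 2(i-r)+j$, produces the lower-triangular expansion
\begin{align*}
s_{(2^r, 1^{2(i-r)+j})} = \sum_{k=0}^r \Bigl[\binom{2(i-k)+j}{r-k} - \binom{2(i-k)+j}{r-1-k}\Bigr] m_{(2^k, 1^{2(i-k)+j})}.
\end{align*}

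Substituting this into the RHS of the proposition and equating the coefficient of $m_{(2^k, 1^{2(i-k)+j})}$ with the value $(i)_k (\theta+i+j)_k (2(i-k)+j)!$ supplied by Proposition 7.2 reduces the statement, for each fixed $k \le i$, to the polynomial-in-$\theta$ identity
\begin{align*}
\sum_{r=k}^{i} (i)_r (\theta+i+j)_r (i-r-\theta)_{i-r} (i+j-r)! \Bigl[ \binom{2(i-k)+j}{r-k} - \binom{2(i-k)+j}{r-1-k} \Bigr] = (i)_k (\theta+i+j)_k (2(i-k)+j)!.
\end{align*}
Both sides are polynomials in $\theta$ of degree at most $i$, so it suffices to verify agreement at $i+1$ values of $\theta$. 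The natural starting point is $\theta=1$, where $J_\lambda$ degenerates to $H_\lambda s_\lambda$ and the identity collapses to an explicit hook-length check. Additional anchor values come from $\theta = 1, 2, \ldots, i-k$, each of which is a zero of one of the falling-factorial factors $(i-r-\theta)_{i-r}$ and trims the LHS down to a telescoping sum of binomial differences that can be evaluated directly. A quick hand check at $k=i$ and $k=i-1$ already confirms the structure.

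The main obstacle is the alternating-sum identity above: the factor $(i-r-\theta)_{i-r}$ contributes $i-r$ sign changes as $r$ varies, and one must show these combine with the telescoping binomial difference in $r$ to annihilate every contribution except the $r=k$ term while simultaneously producing the clean factored form on the RHS. A backup tactic I would try if the direct verification becomes unwieldy is to pass through the Macdonald involution $\omega_\theta$: this sends $J_{(2^i, 1^j)}$ at parameter $\theta$ to a scalar multiple of $J_{(i+j, i)}$ at parameter $\theta^{-1}$, and Jack polynomials indexed by two-row partitions admit a hypergeometric closed form whose Schur expansion can be transported back through $\omega_\theta$ to yield the claim, trading the alternating-sum combinatorics for a comparison of hypergeometric series.
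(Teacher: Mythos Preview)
The paper does not prove this proposition at all; it is quoted verbatim as Theorem~1.1 of \cite{Koike} and used as a black box to extract $J_{(n-2,2)}$. So there is no ``paper's proof'' to compare your approach against---you are supplying an argument where the paper supplies only a citation.

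Your reduction is set up correctly. The dual Jacobi--Trudi expression $s_{(2^r,1^s)} = e_r e_{r+s} - e_{r-1}e_{r+s+1}$, the formula $e_a e_b = \sum_k \binom{a+b-2k}{a-k} m_{(2^k,1^{a+b-2k})}$, and the resulting lower-triangular Schur-to-monomial expansion are all right, and matching against Stanley's Proposition~7.2 does reduce the claim to exactly the identity you display. After the substitution $m=i-k$, $q=m+j$, $p=r-k$, that identity becomes
\[
\sum_{p=0}^{m} (m)_p\,(\theta+q)_p\,(m-p-\theta)_{m-p}\,(q-p)!\Bigl[\tbinom{m+q}{p}-\tbinom{m+q}{p-1}\Bigr] \;=\; (m+q)!,
\]
which is genuinely a polynomial identity of degree $m=i-k$ in $\theta$ (not $i$; each summand has degree exactly $p+(m-p)=m$, and the right side has degree $0$). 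Small cases $m=1,2$ check out by hand.

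The gap is in the interpolation step. You propose anchors $\theta=1,2,\ldots,i-k$; even with the sharper degree bound $m=i-k$ this is only $m$ points, one short of the $m+1$ needed. More seriously, at $\theta=t\in\{2,\ldots,m\}$ the surviving terms are $p\in\{m-t+1,\ldots,m\}$, and the factor $(m-p-\theta)_{m-p}$ evaluates to $(-1)^{t-s}\frac{(t-1)!}{(s-1)!}$ for $p=m-t+s$; combined with $(m)_p(\theta+q)_p(q-p)!$ this does not visibly telescope against the binomial difference $\binom{m+q}{p}-\binom{m+q}{p-1}$. You would need to actually carry out the evaluation and show it equals $(m+q)!$, which is essentially the same difficulty as the original identity. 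The $\theta=1$ anchor is clean (only $p=m$ survives and one recovers the hook-length product $H_{(2^i,1^j)}$), but the others are not. Your backup via $\omega_\theta$ and two-row Jack polynomials is the more promising route and is closer in spirit to how such formulas are usually proved; if you pursue that, the relevant input is the explicit Jacobi-polynomial form of $J_{(a,b)}$ together with Stanley's duality $c_{\lambda^t,\mu}(\theta) = (-\theta)^{n-\ell(\mu)} c_{\lambda,\mu}(\theta^{-1})$.
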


The next result relates Jack polynomials corresponding to conjugate
partitions, when expressed in terms of the power sum basis.

\begin{prop}[(\cite{StanleyJack}, Corollary~3.5)]
Let $J_\lambda= \sum_\mu c_{\lambda\mu}(\theta) p_{\mu}$, then
\begin{eqnarray*}
J_{\lambda^t} &=& \sum_\mu(-
\theta)^{n-\ell(\mu)} c_{\lambda\mu
}\biggl(\frac{1}{\theta}\biggr)
p_\mu.
\end{eqnarray*}
\end{prop}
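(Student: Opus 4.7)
The plan is to prove the identity via the $\theta$-deformed involution $\omega_\theta$ on the ring $\Lambda$ of symmetric functions over $\mathbb{Q}(\theta)$, defined on power-sum generators by $\omega_\theta(p_r) = (-1)^{r-1}\theta\, p_r$ and extended multiplicatively, so that $\omega_\theta(p_\mu) = (-1)^{n-\ell(\mu)}\theta^{\ell(\mu)}\, p_\mu$ for $|\mu|=n$. The key target identity is
\begin{equation*}
\omega_\theta\bigl(J_\lambda(\theta)\bigr) \;=\; \theta^n\, J_{\lambda^t}(1/\theta),
\end{equation*}
from which the proposition will follow by a routine unpacking: applying $\omega_\theta$ to $J_\lambda(\theta) = \sum_\mu c_{\lambda,\mu}(\theta)\, p_\mu$ produces $\sum_\mu (-1)^{n-\ell(\mu)}\theta^{\ell(\mu)}\, c_{\lambda,\mu}(\theta)\, p_\mu$; replacing $\theta$ with $1/\theta$ throughout and multiplying by $\theta^n$ then converts each coefficient into $(-\theta)^{n-\ell(\mu)} c_{\lambda,\mu}(1/\theta)$, matching the stated expansion of $J_{\lambda^t}(\theta)$.

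To establish the displayed identity, I would first check that $\omega_\theta$ is an isometry $(\Lambda,\langle\cdot,\cdot\rangle_\theta)\to(\Lambda,\langle\cdot,\cdot\rangle_{1/\theta})$, where $\langle p_\mu,p_\nu\rangle_\theta = \delta_{\mu\nu} z_\mu \theta^{\ell(\mu)}$ is the Hall--Jack inner product already featured in Theorem~\ref{diagonalization}; a one-line computation on the diagonal basis $\{p_\mu\}$ yields $\langle\omega_\theta p_\mu,\omega_\theta p_\nu\rangle_{1/\theta} = \langle p_\mu,p_\nu\rangle_\theta$. Because $\{J_\nu(\theta)\}$ is an orthogonal basis under the source inner product, its image under $\omega_\theta$ is automatically orthogonal under $\langle\cdot,\cdot\rangle_{1/\theta}$, and therefore $\omega_\theta J_\lambda(\theta) = c_\lambda\, J_{\nu(\lambda)}(1/\theta)$ for some nonzero scalar $c_\lambda$ and some bijection $\lambda\mapsto \nu(\lambda)$ on partitions of $n$.

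Identifying $\nu(\lambda) = \lambda^t$ is the step I expect to be the main obstacle, since $\omega_\theta$ is not itself triangular in the monomial basis $\{m_\mu\}$. The cleanest route goes through Macdonald's dual $P_\lambda$- and $Q_\lambda$-normalizations of Jack polynomials (\cite{Macdonald}, Ch.~VI): one writes $J_\lambda = (\prod_{s\in\lambda}h^*(s))\, P_\lambda = (\prod_{s\in\lambda}h_*(s))\, Q_\lambda$ with $h^*,h_*$ from Definition~\ref{definition 1}, and invokes Macdonald's duality $\omega_\theta P_\lambda(\theta) = Q_{\lambda^t}(1/\theta)$, whose leading monomial is $m_{\lambda^t}$; swapping arm and leg lengths in the $h^*/h_*$ products on the conjugate partition then yields the ratio $\theta^n$ between the $J$- and $Q$-normalizations. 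As an independent consistency check that bypasses much of the combinatorics, the scalar $c_\lambda$ can be pinned down simply by comparing $p_{(1^n)}$-coefficients: $\omega_\theta(p_{(1^n)}) = \theta^n p_{(1^n)}$, and by Lemma~\ref{1^n} both $c_{\lambda,(1^n)}(\theta)$ and $c_{\lambda^t,(1^n)}(1/\theta)$ equal $1$, forcing $c_\lambda = \theta^n$ as required.
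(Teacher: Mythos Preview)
The paper does not prove this proposition at all; it is simply quoted from \cite{StanleyJack} (Corollary 3.5) and used as input for the lower-bound computation. So there is no ``paper's proof'' to compare against, only Stanley's original argument, which proceeds exactly through the automorphism $\omega_\theta$ and the duality $\omega_\theta J_\lambda(\theta)=\theta^n J_{\lambda^t}(1/\theta)$ that you target. In that sense your route is the standard one.

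One logical point deserves tightening. From the isometry property of $\omega_\theta$ you correctly deduce that $\{\omega_\theta J_\lambda(\theta)\}_\lambda$ is an orthogonal family for $\langle\cdot,\cdot\rangle_{1/\theta}$, but the step ``and therefore $\omega_\theta J_\lambda(\theta)=c_\lambda J_{\nu(\lambda)}(1/\theta)$'' does not follow from orthogonality alone: an inner-product space has many orthogonal bases, and Jack polynomials are singled out only by the additional upper-triangularity condition in the monomial basis. You implicitly acknowledge this by falling back on Macdonald's duality $\omega_\theta P_\lambda(\theta)=Q_{\lambda^t}(1/\theta)$, which is the right fix and is precisely how Stanley and Macdonald argue; once you invoke it, the preceding orthogonality paragraph becomes redundant rather than load-bearing. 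Your alternative determination of the scalar $c_\lambda=\theta^n$ by comparing $p_{(1^n)}$-coefficients via Lemma~\ref{1^n} is clean and correct, and is a nice shortcut around the $h^*/h_*$ bookkeeping.
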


We also recall from Corollary~\ref{Schurpower} that $s_\lambda= \sum_\rho\chi_\lambda(\rho) z_\rho^{-1} p_\rho$, where $\chi_\lambda
(\rho)$ is the character of $\lambda$ evaluated at $\rho$.

Combining the previous results, we easily get
\begin{eqnarray*}
\hspace*{-4pt}&& J_{n-2,2} (\theta)
\\[-2pt]
\hspace*{-4pt}&&\qquad = \sum_\rho(-
\theta)^{n-\ell(\rho)} \biggl[\biggl(2-\frac
{1}{\theta}\biggr) \biggl(1-
\frac{1}{\theta}\biggr) (n-2)\chi_{\rho}^{1^n}
\\[-2pt]
\hspace*{-4pt}&&\hspace*{101pt}{} + 2\biggl(n-2 +
\frac{1}{\theta}\biggr)
\biggl(1-\frac{1}{\theta}\biggr) (n-3)!\chi_{2^1,1^{n-2}}({\rho})
\\[-2pt]
\hspace*{-4pt}&&\hspace*{101pt}{} + 2\biggl(n-2 + \frac{1}{\theta}\biggr) \biggl(n-3 + \frac{1}{\theta}\biggr) (n-4)!
\chi_{2^2,
1^{n-4}}({\rho})\biggr] z_\rho^{-1}
p_\rho,
\end{eqnarray*}
where $\chi_\lambda(\rho)$ is the irreducible character $\lambda$
evaluated at $\rho$.
Therefore we can read off the coefficients
%
\begin{eqnarray}\label{c{n-2,2,rho}}
&& c_{(n-2,2),\rho}(\theta)\nonumber
\\[-2pt]
&&\qquad = \frac{(-\theta)^{n-\ell(\rho)}}{z_\rho
} \biggl(\biggl(2-
\frac{1}{\theta}\biggr) \biggl(1-\frac{1}{\theta}\biggr) (n-2)!
\chi_{1^n}(\rho )
\nonumber\\[-9pt]\\[-9pt]
&&\hspace*{91pt}{} + 2\biggl(n-2+\frac{1}{\theta}\biggr) \biggl(1-\frac{1}{\theta}\biggr)
(n-3)! \chi_{2^1,1^{n-2}}(\rho) \nonumber
\\[-2pt]
&&\hspace*{91pt}{}+ 2\biggl(n-2 + \frac{1}{\theta}\biggr)
\biggl(n-3 + \frac{1}{\theta
}\biggr) (n-4)! \chi_{2^2,1^{n-4}}(\rho)\biggr),\nonumber
\end{eqnarray}
and using the relation $\chi_{\lambda^t}(\rho) = \chi_\lambda(\rho
) \operatorname{sgn}(\rho)$, as well as the formula for $\chi_n$,
$\chi_{n-1,1}$, $\chi_{n-2,2}$ and $\chi_{n-2,1^2}$ derived above, we
also get
%
\begin{eqnarray}\label{d{n-2,2,rho}}
d_{(n-2,2),\rho}(\theta) &=& m_1 \frac{n-2 + ({1}/{\theta})}{(n-1)(n-2)} \biggl[(n-3)
\biggl(1-\frac{1}{\theta}\biggr) - \frac{3}{2} \biggl(n-3 +
\frac{1}{\theta}\biggr)\biggr]
\nonumber
\\
&&{}+ m_1^2 \frac{(n-2 + ({1}/{\theta}))(n-3 + ({1}/{\theta}))}{2(n-1)(n-2)}
\\
&&{} + m_2
\frac{(n-2 + ({1}/{\theta}))(n-3 + ({1}/{\theta}))}{(n-1)(n-2)}. \nonumber
\end{eqnarray}

Using $i=0$ and $j=n$, one gets
\begin{eqnarray*}
J_{(n)}(\theta) &=& \sum_{\rho}
\theta^{n-\ell(\rho)} n! z_\rho ^{-1} p_\rho.
\end{eqnarray*}
Hence
%
\begin{equation}
\label{c{n,rho}} c_{(n),\rho}(\theta) = \theta^{n-\ell(\rho)} n!
z_{\rho}^{-1},
\end{equation}
and
%
\begin{equation}
\label{d{n,rho}} d_{(n),\rho}(\theta) = 1.
\end{equation}

To mimic the case of $\theta=1$, we need to express $d_{(n-1,1),\rho
}(\theta)^2$ in terms of the other $d_\lambda$'s. First using (\ref
{d{n-1,1,rho}}), we get
\begin{eqnarray*}
d_{(n-1,1),\rho}(\theta)^2 &=& \frac{1}{\theta^2} + \biggl(
\frac{1}{\theta
n} + \frac{n-1}{n}\biggr)^2
m_1^2 - \frac{2}{\theta} \biggl(\frac{1}{\theta n} +
\frac{n-1}{n}\biggr) m_1.
\end{eqnarray*}
Using $m_1, m_1^2, m_2$ and $1$ as a basis, we can write
%
\begin{equation}
\label{secondmoment} \qquad\quad d_{(n-1,1),\rho}(\theta)^2 = u + v
d_{(n-1,1),\rho}(\theta) + w d_{(n-2,1^2),\rho}(\theta) + x d_{(n-2,2),\rho}(
\theta)^2,
\end{equation}
for some indeterminates $u,v,w,x$.

Comparing coefficients of the $m_i^k$'s, we get the following four equations:
%
\begin{eqnarray}
&& u - \frac{v}{\theta} + \frac{w}{\theta^2} = \frac{1}{\theta^2},
\\
&& v\biggl(\frac{1}{\theta n} + \frac{n-1}{n}\biggr) - w \biggl(
\frac{1 + (n-1) \theta
}{n \theta^2} + \frac{2 + (n-2)\theta}{2 \theta n}\biggr)\nonumber
\\
&&\quad{} + x\frac{n-2 + ({1}/{\theta})}{(n-1)(n-2)} \biggl[(n-3) \biggl(1-\frac
{1}{\theta}\biggr) - \frac{3}{2} \biggl(n-3 + \frac{1}{\theta}\biggr)\biggr]
\\
&&\qquad
= -\frac{2}{\theta} \biggl(\frac{1}{\theta n} + \frac{n-1}{n}\biggr),\nonumber
\\
&& w\frac{2 + (n-2)\theta}{2\theta n} + x \frac{(n-2 + ({1}/{\theta}))( n -3 + ({1}/{\theta}))}{2(n-1)(n-2)} = \biggl(\frac{1}{\theta n} +
\frac{n-1}{n}\biggr)^2,\hspace*{-35pt}
\\
&& - w \frac{2 + (n-2) \theta}{\theta^2 n} + x \frac{(n-2 + ({1}/{\theta}))( n-3 + ({1}/{\theta}) )}{(n-1)(n-2)} =0.
\end{eqnarray}

Solving, we get
\begin{eqnarray*}
u &=& \frac{(n^2-4 n+3) \theta^3+(n-1) \theta^2+(n-1)^2 \theta
+n-3}{\theta^2
(\theta+1) (\theta(n-3)+1) n}
\\
& =& \frac{1}{\theta^2 (\theta
+1)}+\frac{1}{\theta+1} + \mathcal{O}
\biggl(\frac{1}{n}\biggr),
\\
v &=& \frac{(n^3-6 n^2+11 n-6) \theta^3+2 (2 n^2-7
n+4) \theta^2+(3 n+2) \theta-4}{\theta n ((n^2-5 n+6)
\theta^2+(3 n-8) \theta+2)}
\\
& =& 1 + \mathcal{O}\biggl(\frac{1}{n}\biggr),
\\
w &=& \frac{2(1 + \theta(n-1))^2}{(1+\theta)(2 + \theta(n-2)) n} = \frac{2\theta}{1+\theta} + \mathcal{O}\biggl(
\frac{1}{n}\biggr),
\\
x &=& \frac{2(1+ \theta(n-1))^2 (n-1)(n-2)}{(1+\theta) n^2(1+\theta
(2n -5) + \theta^2 (n-2)(n-3))} = \frac{2}{1+\theta} + \mathcal {O}\biggl(
\frac{1}{n}\biggr).
\end{eqnarray*}
Notice $x + w = 2 + O(\frac{1}{n})$.\vspace*{1pt}

Also by (\ref{d{n-1,1,rho}}), (\ref{d{n-2,1^2,rho}}), (\ref
{d{n-2,2,rho}}) and (\ref{d{n,rho}}), we get
\begin{eqnarray*}
d_{n,1^n} &=& 1,
\\
d_{(n-1,1),1^n} &=& n-1,
\\
d_{(n-2,1^2),1^n} &=& \frac{(n-1)(n-2)}{2},
\\
d_{(n-2,2),1^n} &=& \frac{n(n-3)}{2},
\end{eqnarray*}
which are independent of $\theta$ because of the normalization chosen
for Jack polynomials.

Finally we recall $c_\lambda(\theta)$ are eigenfunctions of $P_\theta
$, hence so are $d_\lambda(\theta)$, with eigenvalue $\beta_\lambda$.
We list the relevant eigenvalues here:
\begin{eqnarray*}
\beta_{(n)} &=& 1,
\\
\beta_{(n-1,1)} &=& 1- (\theta\wedge1) + \frac{\theta
{n-1\choose 2} -1}{(\theta\vee1) {n\choose 2}} = 1-(1 \wedge
\theta ) \frac{2}{n} + \mathcal{O}\biggl(\frac{1}{n^2}\biggr),
\\
\beta_{(n-2,1^2)} &=& 1 - (\theta\wedge1) + \frac{\theta
{n-2\choose 2} -3}{(\theta\vee1) {n\choose 2}} = 1-(\theta
\wedge1) \frac{4}{n} + \mathcal{O}\biggl(\frac{1}{n^2}\biggr),
\\
\beta_{(n-2,2)} &=& 1-(\theta\wedge1) + \frac{\theta
{n-2\choose 2} -2}{(\theta\vee1) {n\choose 2}} = 1 - (\theta
\wedge 1) \frac{4}{n} + \mathcal{O}\biggl(\frac{1}{n^2}\biggr).
\end{eqnarray*}
Notice
\[
\lim_{n\to\infty} \frac{(1-\beta_{(n-1,1)})}{1-\beta_{(n-2,1^2)}} = \lim_{n\to\infty}
\frac{(1-\beta_{(n-1,1)})}{1-\beta_{(n-2,2)}} = 2.
\]

Now\vspace*{2pt} we are fully equipped to prove the lower bound.
First observe\break $\mathcal{L}_\infty(d_{(n-1,1)}(\theta)) \prec
\operatorname{Poi}(\theta^{-1}) + 1$, which comes from Feller
coupling. Here $d_{(n-1,1)}$ stands for the random variable
$d_{(n-1,1),\rho}$ where $\rho$ has Ewens sampling distribution with
parameter $\theta^{-1}$, as indicated by the subscript $\infty$. Therefore,
\[
\lim_{\eta\to\infty}\mathbb{P}_\infty\bigl(d_{(n-1,1)}(
\theta) \le \eta\bigr) =1.
\]
Furthermore,
\[
\mathbb{P}_k \bigl(d_{(n-1,1)}(\theta) \le\eta\bigr) \le
\frac{\operatorname{var}_k(d_{(n-1,1)})}{(\eta- \mathbb{E}_k(d_{(n-1,1)}))^2}.
\]
Let $k = \frac{1}{2} (\theta^{-1} \vee1) n (\log n -c)$ for any $c > 0$.

Since $d_\lambda$ are eigenfunctions, we can compute the mean and
variance at time $k$,
\begin{eqnarray*}
\mathbb{E}_k d_{(n-1,1)} &=& (n-1) \biggl(1-(\theta\wedge1)
\frac{2}{n}\biggr)^k + O(1) = e^c +
\mathcal{O}(1),
\\
\operatorname{var}_k d_{(n-1,1)} &=& \mathbb{E}_k
d_{(n-1,1)}^2 - (\mathbb{E}_k d_{(n-1,1)})^2
\\
&=& u + (n-1)v e^{-(\theta\wedge1) ((2k)/n)(1+ \mathcal{O}({1}/{n}))}
\\
&&{}+ \biggl(\frac{(n-1)(n-2)}{2}w + \frac{n(n-3)}{2}x\biggr) e^{-(\theta\wedge1)
((4k)/n)(1+ O({1}/{n}))}
\\
&&{} - (n-1)^2 e^{-(\theta\wedge1)((4k)/n)(1+ \mathcal{O}(1/n))}
\\
&\le&\frac{1}{1+\theta} + \frac{1}{\theta^2(1+\theta)} + (n-1)e^{-(\theta\wedge1) ((2k)/n)} +
\mathcal{O}\biggl(\frac
{1}{n}\biggr)
\le \mathcal{O}\bigl(e^c
\bigr).
\end{eqnarray*}

Therefore if we let $\eta= \frac{1}{2} e^c$, then
\begin{eqnarray*}
\lim_{c\to\infty}\liminf_n
\mathbb{P}_{k(c)} [d_{(n-1,1)} < \eta] &\le&\lim_{c\to\infty}
\liminf_n \mathcal{O}(1)\frac{e^c}{ ((1/2) e^c + \mathcal{O}(1))^2} = 0.
\end{eqnarray*}
Thus
\begin{eqnarray*}
&& \lim_{c \to\infty} \lim_{n \to\infty} \bigl\llVert
\delta_{1^n} P^{k(c)} - \pi\bigr\rrVert _\mathrm{TV}
\\
&&\qquad \ge \lim_{c \to\infty} \liminf_n \biggl|
\mathbb{P}_{k(c)}\biggl[d_{(n-1,1)} < \frac{1}{2}
e^c\biggr] - \mathbb {P}_\infty\biggl[d_{(n-1,1)} <
\frac{1}{2} e^c\biggr]\biggr|=1.
\end{eqnarray*}

%
\begin{remark}
Wilson's method gives a suboptimal lower bound, once we know the
``geometric'' information that $d_{(n-1,1),\rho} = -\frac{1}{\theta}
+ \frac{1 + (n-1)\theta}{\theta n} m_1(\rho)$:
let $X_1$ be the random variable distributed as $\delta_x P$. We have
\begin{eqnarray*}
R &:=& \sup_{x \in S_n} \mathbb{E}(d_{(n-1,1),X_1} -
d_{(n-1,1),x})^2 \le 1,
\\
\log\frac{1}{\beta_{(n-1,1)}} &=& (\theta\wedge1)\frac{2}{n} + O\biggl(
\frac{1}{n^2}\biggr)
\end{eqnarray*}
and $d_{(n-1,1),1^n} = n-1$. Hence by Wilson \cite{Yuval-book},
\begin{eqnarray*}
t_{\mathrm{mix}}(\varepsilon) &\ge&\frac{1}{2 \log(1/\beta_{(n-1,1)})} \biggl[ \log\biggl[
\frac{(1-\beta_{(n-1,1)}) d_{(n-1,1),1^n}^2}{2R}\biggr] + \log\frac
{1 -\varepsilon}{\varepsilon}\biggr]
\\
&\ge&\frac{n}{4(1 \wedge\theta)} \log n + \log\varepsilon^{-1} + \mathcal{O}(1).
\end{eqnarray*}
This misses a factor of $2$ from the lower bound obtained by second
moment method. The discrepancy is possibly due to the nonlocal nature
of the random transposition walk.
\end{remark}

\begin{appendix}\label{app}
\section{Sekiguchi--Debiard operator over other bases}
Having seen the probabilistic interpretation of the second order
differential operator (\ref{secondorderLaplace-Beltrami}) expressed
in the power sum symmetric basis $p_\lambda$, it is natural to
consider the following:

%
\begin{question}
Are there other bases of the symmetric polynomials $\Lambda_n$ over
which $L^2_\theta$ has natural probabilistic interpretation?
\end{question}

Here we examine the remaining four fundamental bases: monomial,
elementary and complete. The action of $L_\theta^2$ on the monomial
basis $m_\lambda$ is well known to be strictly upper triangular (\cite{Macdonald}, page 317), when the rows and columns of the Markov matrix
are arranged in a total order compatible with the natural partial order
on the set of partitions $\mathcal{P}_n$ of $n$: $\mu< \lambda$ if
$\mu_1 + \cdots+ \mu_r \le\lambda_1 + \cdots+ \lambda_r$ for all
$r$. In particular, if $L^2_\theta$ does define a Markov matrix
(meaning the entries are nonnegative), it has a single absorbing state
at $(1^n)$.\looseness=-1

Next consider the action of $L_\theta^2$ on $e_\lambda$, the
elementary symmetric polynomials. This has been studied in detail in
\cite{Beerends}. Here we give a quick development avoiding lengthy
computations. The action of $U$ is easy to describe. For any simple
elementary polynomial $e_r$, the operators $x_i \partial_i$ and $(x_i
\partial_i)^2$ simply collect all the terms in~$e_r$ that contains the
factor $x_i$. So after summing over $i \in[n]$, this results in a
constant multiple of the identity. Thus to get a nontrivial action, one
must consider a composite $e_{r_1,r_2}:=e_{r_1} e_{r_2}$. In this case,
one can show that for $r_1 \le r_2$,
\begin{eqnarray*}
U (e_{r_1,r_2}) &=& 3(1+r_1)e_{r_1,r_2} -\sum
_{j=0}^{r_1 -1} 2(r_1 + r_2
-2j)e_{r_1+r_2-j,j}.
\end{eqnarray*}
Thus $U$ is strictly lower triangular with respect to the partial order
$\preceq$. It turns out that the action of $V$ on the $e_\lambda$ is
diagonal: first of all $V$ satisfies a product rule on $e_\lambda=
\prod_{j=1}^{\ell(\lambda)} e_{\lambda_j}$; by pairing up $j \neq
k$, one also sees that $V e_r$ consists of monomials with no repeated
factors, hence by symmetry must be a multiple of $e_r$. Thus the
following linear combination yields a legitimate Markov matrix:
\[
M_e(c_1,c_2):=c_1 I +
c_2 \biggl(\frac{\theta}{2} U + V\biggr).
\]
Notice that we need to add the multiple of $I$ to make sure that the
diagonal entries of $M_e$ are nonnegative. Also observe that the Jack
parameter $\theta$ needs to be nonpositive for the off-diagonal
entries to be nonnegative. It is clear from the description of $U$ and
$V$ that this Markov chain is absorbing at $(n)$, because the next step
either stays in the current state or goes to a state corresponding to a
partition bigger than that of the current state.

Next we look at the complete symmetric polynomials $h_\lambda$. First
consider the action of $\langle X, \nabla\rangle$ on $h_r$, one of
the generators. Since $h_r = s_{(r)}$ is a degree $r$ homogeneous
polynomial, the action of $\langle X, D\rangle= \sum_{i=1}^n x_i
\partial_i$ is simply multiplication by~$r$; that is, any homogeneous
polynomials are eigenfunctions $\langle X, D\rangle$.
However, the operator $(\langle X, D \rangle)^2$ acts nontrivially on $h_r$,
\[
\sum_{i=1}^n (x_i
\partial_i)^2 h_4 = -2 h_{1^4}+10
h_{2,1,1}-8 h_{2^2}-12 h_{3,1}+28 h_4.
\]
For partitions of more than one part, the computation gets unwieldy,
and I have not tried to express $U(h_{r_1}h_{r_2})$ and
$V(h_{r_1}h_{r_2})$ in terms of $h_\lambda$ explicitly\vadjust{\goodbreak} because of the
following numerical observation: for $\lambda= (3,2,1)$, we have
\begin{eqnarray*}
U h_\lambda&=& 2h_{2,1^4} - 8h_{2^2,1^2} -
2h_{3,1^3} + 14 h_{3,2,1} + 6 h_{3^2} +
6h_{4,1^2} + 8h_{4,2} + 10h_{5,1},
\\
V h_\lambda&=& -h_{2,1^4} + 4h_{2^2,1^2} +
h_{3,1^3} + 32 h_{3,2,1}.
\end{eqnarray*}
The only linear combination of the above two expressions that yields
nonnegative coefficients is $\frac{1}{2} U + V$, which\vspace*{1pt} corresponds to
$\theta=1$. But in that case, the Markov chain is again absorbing at
(\ref{Hanlonbeta}). So we arrive at the following result:\looseness=-1

\begin{prop}
The operator $L_\theta^2$ gives a Markov matrix under the complete
symmetric polynomial basis $h_\lambda$ for all $n$ if and only if
$\theta=1$. In this case, the Markov chain never goes toward
partitions of fewer or equal parts, hence is absorbing at $(n)$.
\end{prop}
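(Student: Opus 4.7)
For the ``only if'' direction, the explicit computation of $U h_{(3,2,1)}$ and $V h_{(3,2,1)}$ displayed just before the proposition yields three linear inequalities that pin down $\theta = 1$. Writing the combination $a U h_{(3,2,1)} + b V h_{(3,2,1)}$ and demanding nonnegativity of the off-diagonal $h_\mu$-coefficients, the entries at $\mu = (2,1^4)$, $(2^2,1^2)$, $(3,1^3)$ give respectively $2a - b \geq 0$, $-8a + 4b \geq 0$, and $-2a + b \geq 0$, which jointly force $b = 2a$. In $L_\theta^2 = \binom{N}{2}^{-1}(\tfrac{1}{2}U + \theta^{-1}(V - (n-1)N))$ the off-diagonal ratio $b/a$ equals $2/\theta$, so $\theta = 1$ is the unique choice.

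For the ``if'' direction, the dominance triangularity (hence absorption at $(n)$) follows from a self-adjointness argument. At $\theta = 1$ the Schur polynomials $s_\lambda = J_\lambda(1)/H_\lambda$ are orthogonal eigenfunctions of $L_1^2$ under the Hall inner product $\langle p_\lambda, p_\mu\rangle = z_\lambda \delta_{\lambda\mu}$ on $\Lambda_n$, so $L_1^2$ is self-adjoint. Since $\{h_\lambda\}$ and $\{m_\lambda\}$ are dual bases with respect to the Hall pairing, writing $L_1^2 m_\mu = \sum_\nu A_{\nu\mu} m_\nu$ and $L_1^2 h_\lambda = \sum_\mu B_{\mu\lambda} h_\mu$ gives
\begin{align*}
B_{\mu\lambda} = \langle L_1^2 h_\lambda, m_\mu\rangle = \langle h_\lambda, L_1^2 m_\mu\rangle = A_{\lambda\mu}.
\end{align*}
The Macdonald result cited at the top of the appendix says $A_{\nu\mu} = 0$ unless $\nu \leq \mu$ in dominance order, so equivalently $B_{\mu\lambda} = 0$ unless $\mu \geq \lambda$. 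This places the unique absorbing state at $(n)$.

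The main obstacle is the nonnegativity of $B_{\mu\lambda}$ for $\mu > \lambda$, uniformly in $\lambda$ and $n$. By the identity $B_{\mu\lambda} = A_{\lambda\mu}$, this reduces to showing that $L_1^2$ has nonnegative off-diagonal entries in the $m_\lambda$ basis at $\theta = 1$. The plan is to derive a closed-form expansion of $L_1^2 m_\mu$ in the $m$-basis using the explicit actions of $U = \sum_i x_i^2 \partial_i^2$ and $V = \sum_{i\neq j} x_i^2 \partial_i / (x_i - x_j)$ on monomial symmetric polynomials, and to match each nonzero off-diagonal term with an elementary dominance-decreasing ``splitting move'' on the Young diagram of $\mu$, producing a manifestly nonnegative integer count. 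A more conceptual alternative uses Jacobi-Trudi $s_\lambda = \det(h_{\lambda_i - i + j})$ together with the eigenvalue equation $L_1^2 s_\lambda = \beta_\lambda s_\lambda$ from \eqref{beta lambda} to obtain
\begin{align*}
B_{\nu\mu} = \sum_\lambda K_{\lambda\mu}\, \beta_\lambda\, K^{-1}_{\lambda\nu},
\end{align*}
where $K$ is the Kostka matrix; positivity then has to be extracted from the signed cancellations in $K^{-1}$, perhaps using monotonicity of $\beta_\lambda$ in dominance order (Lemma~\ref{technical}). Either route requires substantial combinatorial bookkeeping, and handling the cancellations is the delicate step.
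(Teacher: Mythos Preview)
Your ``only if'' direction is exactly the paper's: read off the constraint $\theta=1$ from the displayed expansion of $Uh_{(3,2,1)}$ and $Vh_{(3,2,1)}$.

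For the ``if'' direction your triangularity argument is correct and genuinely different from the paper's. You use self-adjointness of $L_1^2$ under the Hall pairing together with the $h$--$m$ duality to get $B_{\mu\lambda}=A_{\lambda\mu}$, and then invoke the upper-triangularity of $L_\theta^2$ in the $m$-basis. This cleanly yields $B_{\mu\lambda}=0$ unless $\mu\ge\lambda$, hence absorption at $(n)$.

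The gap is nonnegativity, as you yourself flag. Your identity $B_{\mu\lambda}=A_{\lambda\mu}$ reduces the question to nonnegativity of the off-diagonal entries of $L_1^2$ in the \emph{monomial} basis, but you do not establish that, and the two routes you sketch (direct expansion of $L_1^2 m_\mu$, or the Kostka sum $\sum_\lambda K_{\lambda\mu}\beta_\lambda K^{-1}_{\lambda\nu}$) both face real sign-cancellation issues that you do not resolve. So as written the proposal does not prove that $L_1^2$ ``gives a Markov matrix'' in the $h$-basis.

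The paper closes this gap with a different duality: instead of pairing $h$ with $m$, it uses the involution $\omega:\Lambda\to\Lambda$ sending $e_r\leftrightarrow h_r$ and $s_\lambda\leftrightarrow s_{\lambda^t}$. Because $\beta_{\lambda^t}(1)=-\beta_\lambda(1)$, one has $\omega L_1^2\omega=-L_1^2$ on $\Lambda_n$, so the matrix of $L_1^2$ in the $h$-basis is the negative of its matrix in the $e$-basis. The off-diagonal entries in the $e$-basis were computed explicitly in the paragraph preceding the proposition (the formula for $U(e_{r_1,r_2})$ extends to arbitrary $e_\lambda$ by the second-order Leibniz rule, using $Ue_r=0$) and are all nonpositive; hence in the $h$-basis they are all nonnegative. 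This sidesteps the Kostka cancellations entirely and is the missing idea in your argument.
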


\begin{pf}
When $\theta=1$, $h_\lambda$ are dual to $e_\lambda$ with respect to
the Jacobi--Trudy identity. Hence the walk defined by $D_1^2$ on
$h_\lambda$ can\vspace*{1pt} be obtained from the upper-triangular walk on
$e_\lambda$ under the map $\lambda\mapsto\lambda^t$; in particular
the walk is absorbing at $(r)$. For $\theta\neq1$, the numerical
example above suffices to show the associated walk is not positive.
\end{pf}

For $\theta\neq1$, the resulting signed Markov matrix seems to have
nontrivial left eigenvector corresponding to the eigenvalue $1$. I
have not checked if this corresponds to some nice stationary
distribution on $\mathcal{P}_n$; presumably it will define a signed measure.

\section{Higher order Sekiguchi--Debiard operators}
Throughout this section $N$ will denote the number of underlying
variables in the symmetric functions, and $n$ will denote the weight of
partitions, consistent with previous sections.
It is possible to study higher order differential operators on $\Lambda
_N$ from the Sekiguchi--Debiard operator-valued generating function
(see \cite{Macdonald}, page 328),
%
\begin{eqnarray}\label{Sekiguchi-Debiard}
D_\theta(X) &:=& a_\delta(x)^{-1} \sum
_{w \in S_N} \varepsilon(w) x^{w
\delta}\prod_{j=1}^N \bigl(X + (w \delta)_j +
\theta x_j \partial_j\bigr)
\nonumber\\[-9pt]\\[-9pt]
&=&  \sum_{k=0}^N D_\theta^k X^{N-k}.\nonumber
\end{eqnarray}
Since the seminal work of Diaconis and Ram \cite{Aux} interpreting
$D_\theta^2$ above as the generator of an auxilliary variable Markov
chain, it has been tempting to consider the following:

\begin{question}
Does any of the higher order $D_\theta^k$'s admit natural
probabilistic interpretation?\vadjust{\goodbreak}
\end{question}

Below we give complete analysis of $D_\theta^3$, and show that the
answer is not nearly as nice as for $D_\theta^2$. To begin, it
suffices to understand the following two-parameter family of operators:
\begin{eqnarray*}
D(\lambda,\mu;h) &=& a_\delta(x)^{-1} \sum
_{w \in S_N} \varepsilon(w) \prod_{i=1}^\ell
\Biggl(\sum_{j=1}^N (w
\delta)_j^{\lambda_i} (x_j \partial_j)^{\mu_i}\Biggr)
\sum_{j=1}^N(x_j
\partial_j)^h
\\
&=&a_\delta(x)^{-1} \sum_{j_1, \ldots, j_\ell}
\prod_{i=1}^\ell (x_{j_i}
\partial_{j_i})^{\lambda_i} a_\delta(x) (x_{j_i}
\partial _{j_i})^{\mu_i} \sum_{j=1}^N
(x_j \partial_j)^h,
\end{eqnarray*}
where $\lambda,\mu$ are positive integer compositions and $\ell=
\ell(\mu) = \ell(\lambda)$.
Indeed, it is easy to see that [denoting by $(j_1, \ldots, j_k)$ all
distinct indices]
\begin{eqnarray*}
D^k_\theta&=& a_\delta(x)^{-1} \sum
_{w \in S_N} \sum_{(j_1, \ldots,j_k)} \sum
_{u=0}^k \frac{\theta^u}{u! (k-u)!} \prod
_{i=1}^{k-u} (w \delta)_{j_i} \prod
_{i=k-u+1}^k (x_{j_i}
\partial_{j_i}),
\end{eqnarray*}
which\vspace*{1pt} can be expressed as a linear combination of $D(\lambda,\mu,h)$'s with $|\lambda| + |\mu| + h= k$ and $h \neq1$; the factors of
the form $\sum_{j=1}^N (w \delta)_j^k$ or $\sum_{j=1}^N (x_j
\partial_j)^k$ all evaluate to constant by symmetry, and the operator
$\sum_{j=1}^N (x_j \partial_j)$ acts on $p_\lambda$ by constant
multiplication.

For $k=3$, we thus have three operators to consider: $D((1),(2);0)$,
$D((2),\break (1);0)$ and $D(\varnothing,\varnothing;3)$. We compute the action
of each on $p_\lambda$ below. We need the following three computations:
\begin{itemize}
\item $\displaystyle a_\delta(x)^{-1} x_i \partial_i a_\delta(x) = \sum_{j\neq i}
\frac{x_i}{x_i - x_j}$;\vspace*{5pt}

\item$\displaystyle
\begin{aligned}[t]
(x_i \partial_i)^2 p_\lambda&=& x_i \partial_i \sum_{s=1}^{\ell
(\lambda)} \frac{p_\lambda}{p_{\lambda_s}}(x_i \partial_i)
p_{\lambda_s}
\\
&=& \sum_{s=1}^{\ell(\lambda)}\frac{p_\lambda
}{p_{\lambda_s}}(x_i \partial_i)^2 p_{\lambda_s}
+ \sum_{t \neq s}
\frac{p_{\lambda}}{p_{\lambda_s}p_{\lambda_t}}\bigl[(x_i \partial_i)
p_{\lambda_s}\bigr]\bigl[(x_i \partial_i) p_{\lambda_t}\bigr]
\\
&=& \sum_{s=1}^{\ell(\lambda)} \lambda_s^2 x_i^{\lambda_s} \frac
{p_\lambda}{p_{\lambda_s}} + \sum_{s \neq t} \lambda_s \lambda_t
x_i^{\lambda_s + \lambda_t} \frac{p_\lambda}{p_{\lambda
_s}p_{\lambda_t}};
\end{aligned}$\vspace*{5pt}

\item$
\begin{aligned}[t]
\displaystyle \sum_{i=1}^N \sum_{j \neq i} \frac{x_i^{r + 1}}{x_i - x_j} &=& \frac
{1}{2}\sum_{i \neq j} \frac{x_i^{r +1} - x_j^{r +1}}{x_i - x_j} =
\frac{1}{2} \sum_{i \neq j} \sum_{u=0}^r x_i^u x_j^{r-u}
\\
&=& \displaystyle\frac
{1}{2} \mathop{\sum_{r_1 + r_2 = r\dvtx }}_{r_i \ge1} p_{r_1} p_{r_2} +
\frac{2N -r -1}{2} p_r.
\end{aligned}$
\end{itemize}

From this we have
\begin{eqnarray*}
&& D\bigl((1),(2);0\bigr) p_\lambda
\\
&&\qquad = a_\delta(x)^{-1}
\sum_{i=1}^N \bigl[x_i
\partial _i a_\delta(x)\bigr] (x_i
\partial_i)^2 p_\lambda
\\
&&\qquad = p_\lambda\Biggl(
\sum_{s=1}^{\ell(\lambda)} \frac{\lambda_s^2}{2}
\Biggl[(2N -\lambda_s -1) + \sum_{u=1}^{\lambda_s -1}
\frac{p_u p_{\lambda_s -u}}{p_{\lambda
_s}}\Biggr]
\\
&&\hspace*{51pt}{}+ \sum_{s \neq t} \frac{\lambda_s \lambda_t}{2} \Biggl[(2N-
\lambda_s -\lambda_t -1) \frac{p_{\lambda_s + \lambda_t}}{p_{\lambda_s}
p_{\lambda_t}} + \sum
_{u=1}^{\lambda_s + \lambda_t -1} \frac{p_u
p_{\lambda_s + \lambda_t -u}}{p_{\lambda_s}p_{\lambda_t}}\Biggr]
\Biggr).
\end{eqnarray*}
It is worth pointing out that the sum $ \sum_{u=1}^{\lambda_s +
\lambda_t -1} \frac{p_u p_{\lambda_s + \lambda_t -u}}{p_{\lambda
_s}p_{\lambda_t}}$ contains a constant term (when $u \in\{\lambda_s,
\lambda_t\}$).

Next we consider $D((2),(1);0)$. Again we collect some computations below:
\begin{itemize}
\item $\displaystyle x_i \partial_i p_\lambda= \sum_{s=1}^{\ell(\lambda)}
\lambda_s x_i^{\lambda_s} \frac{p_\lambda}{p_{\lambda_s}}$.
\item$\displaystyle
\begin{aligned}[t]
&& a_\delta(x)^{-1}(x_i \partial_i)^2 a_\delta(x)
\\
&&\qquad = (x_i \partial
_i)^2 \log a_\delta(x) + x_i^2 \bigl(\partial_i \log a_\delta(x)\bigr)^2
\\
&&\qquad = \sum_{j \neq i} \frac{-x_i x_j}{(x_i -x_j)^2} + \sum_{j \neq i}
\sum_{k \neq i,j} \frac{x_i^2}{(x_i - x_j)(x_i -x_k)}
+ \sum_{j \neq i} \frac{x_i^2}{(x_i -x_j)^2}
\\
&&\qquad = \sum_{j \neq i} \frac{x_i}{x_i - x_j} + \mathop{\sum_{j \neq
k\dvtx }}_{j, k \neq i} \frac{x_i^2}{(x_i - x_j) (x_i - x_k)}.
\end{aligned}
$

\item $\displaystyle
\begin{aligned}[t]
&& \sum_{i=1}^N \mathop{\sum_{j \neq k\dvtx }}_{j, k \neq i} \frac
{x_i^{r+2}}{(x_i - x_j)(x_i-x_k)}
\\[-3pt]
&&\qquad =
\mathop{\sum_{T \subset[N]\dvtx }}_{|T| = 3} \sum_{i \in T} \mathop
{\sum_{j \neq k\dvtx }}_{j, k \in T \setminus\{i\}} \frac{x_i^{r+2}}{(x_i
- x_j)(x_i - x_k)}
\\
&&\qquad = 2\mathop{\sum_{T \subset[N]\dvtx }}_{|T| = 3} s_r(T) =2 \mathop{\sum_{T \subset[N]\dvtx }}_{|T| = 3} \sum_{\lambda\vdash r} m_\lambda(T)
\end{aligned}$\vadjust{\goodbreak}

$\displaystyle\begin{aligned}[t]
&&\qquad = 2\mathop{\sum_{T \subset[N]\dvtx }}_{|T| = 3} \mathop{\sum_{\lambda
\vdash r\dvtx }}_{\ell(\lambda) \le3} m_\lambda(T)
\\
&&\qquad =2\biggl[\pmatrix{N-3\cr 0} \mathop{\sum_{\lambda\vdash r\dvtx }}_{\ell
(\lambda) = 3} m_\lambda+ \pmatrix{N-2\cr 1}
\mathop{\sum_{\lambda\vdash r\dvtx }}_{\ell(\lambda) = 2} m_\lambda+
\pmatrix{N-1\cr 2}
\mathop{\sum_{\lambda\vdash r\dvtx }}_{\ell(\lambda) = 1} m_\lambda\biggr],
\end{aligned}$\vspace*{5pt}

\noindent where $s_\lambda(T)$ denotes the Schur polynomial over the variables
indexed by $T$, and similarly for $m_\lambda$.

\item $
\begin{aligned}[t]
\mathop{\sum_{\lambda\vdash r\dvtx }}_{\ell(\lambda)=2} m_\lambda&=&
\mathop{\sum_{\lambda\vdash r\dvtx }}_{\ell(\lambda)=2} \sum_{i \neq
j} x_i^{\lambda_1} x_j^{\lambda_2} I(\lambda_1 \neq\lambda_2) +
\frac{1}{2}x_i^{\lambda_1} x_j^{\lambda_2} I(\lambda_1 = \lambda
_2)\\
&=& \frac{1}{2} \mathop{\sum_{r_1 + r_2 = r\dvtx }}_{r_i \ge1} (p_{r_1}
p_{r_2} - p_r)
= \frac{1}{2}
\mathop{\sum_{r_1 + r_2 =r\dvtx }}_{r_i \ge1} p_{r_1} p_{r_2} - \frac
{r-1}{2} p_r.
\end{aligned}
$

\item$
\begin{aligned}[t]
\mathop{\sum_{\lambda\vdash r\dvtx }}_{\ell(\lambda) = 3} m_\lambda&=&
\frac{1}{6}\biggl[
\mathop{\sum_{r_1+ r_2 + r_3 = r\dvtx }}_{r_i \ge1} p_{r_1}p_{r_2}
p_{r_3}
\\
&&\hspace*{10pt}{}- 3
\mathop{\sum_{r_1 + r_2 = r\dvtx }}_{r_i \ge1} (r_1 -1) p_{r_1} p_{r_2} +
(r-1)(r-2) p_r\biggr]
\end{aligned}
$

$
\begin{aligned}[t]
\hspace*{29pt}&=& \frac{1}{6}\mathop{\sum_{r_1+ r_2 + r_3 = r\dvtx }}_{r_i \ge1}
p_{r_1}p_{r_2} p_{r_3}
- \frac{r-2}{4}
\mathop{\sum_{r_1 + r_2 = r\dvtx }}_{r_i \ge1} p_{r_1} p_{r_2}
\\
&&{} + \frac
{(r-1)(r-2)}{6} p_r.
\end{aligned}
$
\end{itemize}
Putting everything together we have
\begin{eqnarray*}
&& D\bigl((2),(1);0\bigr) p_\lambda
\\[-1pt]
&&\qquad = p_\lambda\sum
_{s=1}^{\ell(\lambda)} \lambda_s\biggl[\biggl(N-
\frac{1+\lambda_s}{2}\biggr) \mathop{\sum_{r_1 + r_2 = \lambda_s\dvtx }}_{r_1, r_2 \ge1}
p_{r_1} p_{r_2} + \frac{1}{3} \mathop{\sum
_{r_1 + r_2 + r_3 = \lambda
_s\dvtx }}_{r_i \ge1} p_{r_1} p_{r_2}
p_{r_3}
\\[-1pt]
&&\hspace*{125pt}{}+ \biggl( (N-1) (N-\lambda_s) + \frac{(2 \lambda_s -3)(\lambda_s -1)}{6}\biggr)
p_{\lambda_s}\biggr].
\end{eqnarray*}


Finally we compute the action of $D(\varnothing,\varnothing;3)$,
\begin{eqnarray*}
D(\varnothing,\varnothing;3) p_\lambda &=& \sum_{i=1}^N
(x_i \partial _i)^2 \sum
_{s =1}^{\ell(\lambda)} \lambda_s
x_i^{\lambda_s} \frac
{p_\lambda}{p_{\lambda_s}}
\\[-1pt]
&=& \sum
_{i=1}^N (x_i \partial_i)
\Biggl[\sum_{s =1}^{\ell(\lambda)} \biggl[
\lambda_s^2 x_i^{\lambda_s}
\frac{p_\lambda}{p_{\lambda_s}} + \sum_{ t\neq s}
\lambda_s \lambda_t x_i^{\lambda_s + \lambda_t}
\frac{p_\lambda}{p_{\lambda_s}
p_{\lambda_t}}\biggr]\Biggr]
\\[-1pt]
&=&\sum_{i=1}^N \sum
_{s=1}^{\ell(\lambda)}\biggl[ \lambda_s^3
x_i^{\lambda_s} \frac{p_\lambda}{p_{\lambda_s}} + \sum
_{t \neq s}\biggl[ \lambda_s \lambda_t(
\lambda_s + \lambda_t) x_i^{\lambda_s +
\lambda_t}
\frac{p_\lambda}{p_{\lambda_s} p_{\lambda_t}}
\\[-1pt]
&&\hspace*{109pt}{} + \sum_{u \neq s,t}
\lambda_s \lambda_t \lambda_u
x_i^{\lambda_s +
\lambda_t + \lambda_u} \frac{p_\lambda}{p_{\lambda_s} p_{\lambda
_t} p_{\lambda_u}}\biggr]
\\[-1pt]
&&\hspace*{162pt}{}+ \sum_{t \neq s} \lambda_s^2
\lambda_t x_i^{\lambda_s + \lambda
_t} \frac{p_\lambda}{p_{\lambda_s} p_{\lambda_t}}\biggr]
\\[-1pt]
&=& p_\lambda \Biggl[\sum_{s =1}^{\ell(\lambda)}
\lambda_s^3 + 3 \sum_{t \neq s}
\lambda_s^2 \lambda_t \frac{p_{\lambda_s + \lambda_t}}{p_{\lambda
_s} p_{\lambda_t}} +
\sum_{(s,t,u)} \lambda_s
\lambda_t \lambda_u \frac{p_{\lambda_s + \lambda_t + \lambda_u}}{p_{\lambda_s}
p_{\lambda_t} p_{\lambda_u}}\Biggr],
\end{eqnarray*}
where $\sum_{(s,t,u)}$ denotes summation over all distinct triples.

Next we compute the operators $D((1),(1);0)$ and $D(\varnothing,\varnothing;2)$.
%
\begin{eqnarray}
D\bigl((1),(1);0\bigr)p_\lambda&=& a_\delta(x)^{-1}
\sum_{w \in S_N} \varepsilon (w) x^{w \delta} \sum
_{i=1}^N (w \delta)_i
(x_i \partial_i) p_\lambda
\nonumber\\[-9pt]\label{D1,1;0} \\[-9pt]
&=&p_\lambda\sum_{s=1}^{\ell(\lambda)}
\lambda_s \biggl(\frac{1}{2} \mathop{\sum
_{r_1 + r_2 = \lambda_s}}_{r_i \ge1} \frac
{p_{r_1}p_{r_2}}{p_{\lambda_s}} +
\frac{2N-\lambda_s -1}{2}\biggr) \nonumber
\end{eqnarray}
and
\begin{eqnarray}
D(\varnothing,\varnothing;2) p_\lambda&=& a_\delta(x)^{-1}
\sum_{w \in
S_N} \varepsilon(w) x^{w \delta} \sum
_{i=1}^N (x_i
\partial_i)^2 p_\lambda
\nonumber\\[-9pt]\label{D0,0;2} \\[-9pt]
&=& p_\lambda\Biggl(\sum_{s=1}^{\ell(\lambda)}
\lambda_s^2 + \sum_{s
\neq t}
\lambda_s \lambda_t \frac{p_{\lambda_s + \lambda
_t}}{p_{\lambda_s} p_{\lambda_t}}\Biggr).\nonumber
\end{eqnarray}
We can now compute the action of $D_\theta^2$ on power sum
polynomials; see \cite{Macdonald}, Example VI.3.3(e).
%
\begin{eqnarray}\label{Dtheta^2}
D_\theta^2 p_\lambda &=&a_\lambda(x)^{-1}
\sum_{w \in S_N} \varepsilon (w) x^{w \delta}
\sum_{i \neq j} \biggl(\frac{1}{2} (w
\delta)_i (w \delta)_j + \theta(w \delta)_i(x_j \partial_j) \nonumber
\\
&&\hspace*{175pt} {} + \frac{\theta^2}{2} (x_i\partial_i) (x_j \partial_j)\biggr)p_\lambda\nonumber
\\
&=&a_\delta(x)^{-1} \sum_w\varepsilon(w) x^{w \delta} \biggl(\frac{1}{2} \biggl[\biggl(\sum
_i (w \delta)_i\biggr)^2 - \sum_i (w \delta)_i^2 \biggr]\nonumber
\\
&&\hspace*{98pt}{} +
\theta\biggl[ \biggl(\sum_i (w
\delta)_i \biggr) \biggl(\sum_i
x_i \partial_i\biggr)
- \sum_i (w \delta)_i
(x_i \partial_i)\biggr]\nonumber
\\
&&\hspace*{160pt}{}  + \frac{\theta^2}{2} \biggl[
\biggl(\sum_i x_i \partial_i
\biggr)^2 - \sum_i (x_i
\partial_i)^2\biggr]\biggr)
\\
&=& \frac{1}{2}\biggl[\biggl(\frac{N(N-1)}{2}\biggr)^2 - \frac{(N-1)N(2N-1)}{6}\biggr]\nonumber
\\
&&{}+ \theta\biggl[\frac{N(N-1)n}{2} - D\bigl((1),(1);0\bigr)\biggr] +
\frac{\theta^2}{2} \bigl[n^2 - D(\varnothing,\varnothing;2)\bigr]
p_\lambda\nonumber
\\
&=& \biggl(\frac{1}{2}\biggl[\theta^2 n^2 + \theta
n N(N-1) + \frac{N(N-1)(N-2)(3N-1)}{12}\biggr]\nonumber
\\
&&\hspace*{98pt}{} - \theta D\bigl((1),(1);0\bigr) -
\frac{\theta^2}{2}D(\varnothing,\varnothing;2)\biggr) p_\lambda.\nonumber
\end{eqnarray}
Similarly we can compute $D_\theta^3$ using the inclusion-exclusion principle,
\begin{eqnarray*}
D_\theta^3 p_\lambda &=& a_\delta(x)^{-1}
\\
&&{}\times \sum_w \varepsilon(w) x^{w
\delta}  \sum_{(i,j,k)}\biggl[ \frac{1}{6} (w
\delta)_i (w \delta)_j (w \delta)_k +
\frac{\theta}{3} (w \delta)_i (w \delta)_j
x_k \partial_k
\\
&&\hspace*{96pt}{}+ \frac{\theta^2}{3} (w\delta)_i (x_j
\partial_j) (x_k \partial_k)
+ \frac{\theta^3}{6}(x_i \partial_i) (x_j
\partial_j) (x_k \partial _k)\biggr]
\\
&=&
\biggl(c_N(3,\theta) +\frac{\theta^3}{6} \bigl[2D(\varnothing,
\varnothing;3)- 3n D(\varnothing,\varnothing;2)\bigr]
\\
&&\hspace*{4pt}{} + \frac{\theta^2}{3}\biggl[2D
\bigl((1),(2);0\bigr)- 2 n D\bigl((1),(1);0\bigr)
- \pmatrix{N\cr 2} D(\varnothing,\varnothing;2)\biggr]
\\
&&\hspace*{83pt}{} + \frac{2\theta}{3}\biggl[ D
\bigl((2),(1);0\bigr) - \pmatrix{N\cr 2} D\bigl((1),(1);0\bigr)\biggr] \biggr)
p_\lambda,
\end{eqnarray*}
where $c_N(3,\theta) = \frac{1}{6}\bigl[ {N\choose 2}^3 - \frac
{3}{4} {N\choose 2} {2N\choose 3} + 2{N\choose 2}^2\bigr]+
\frac{\theta}{2} \bigl[{N\choose 2}^2 n - \frac{1}{4}
{2N\choose 3} n\bigr] +\break \frac{\theta^2}{3} {N\choose 2} n^2 + \frac
{\theta^3 n^3}{6}$.
Unfortunately I cannot extract any natural interpretation of Markov
chains from the right-hand side. This is not so surprising since the
composite walk $P_\theta^k$ for $k \ge2$ does not correspond to some
affine transformation of the Metropolization of $P_1^k$ with respect to
the measure $\operatorname{MED}(\theta)$. Nonetheless this gives a new
Markov chain that converges to the multivariate Ewens distribution with
parameter $\theta^{-1}$, since the operators $D_\theta^r$ are
simultaneously diagonalized and the left eigenfunction corresponding to
the eigenvalue $1$ is simply the stationary distribution.

I also computed a numerical example using the symmetric reduction
function in mathematica and the SF package in maple. We take the power
sum polynomial $p_\lambda$ with $\lambda= (3,1^2)$:
\begin{eqnarray*}
D_\theta^2 p_3 p_1^2
&=& -3\theta p_2 p_1 ^3+\bigl(33\theta+35+7
\theta^2\bigr) p_3 p_1 ^2-6
\theta^2 p_4 p_1 - p_2
p_3 \theta^2,
\\
D_\theta^3 p_3 p_1^2
&=& (2/3)\theta p_1^5+\bigl(-4\theta^2-8\theta
\bigr) p_2 p_1^3
\\
&&{} +\bigl(22
\theta^2+(73/6)\theta^3+50+(307/3)\theta\bigr)
p_3 p_1^2+4\theta ^2
p_1 p_2^2
\\
&&{}+\bigl(4\theta^3-20\theta^2\bigr) p_4
p_1+\bigl(-(4/3)\theta^3-2\theta^2\bigr)
p_3 p_2+6 \theta^3 p_5,
\\
D_\theta^2 \circ D_\theta^2
p_3 p_1^2 &=& 3\theta^2
p_1^5+\bigl(-21\theta -4\theta^3-19
\theta^2\bigr) p_2 p_1^3
\\
&&{} +\bigl(505\theta^3+1225+2310\theta+ 4\theta^4+1579
\theta^2\bigr) p_3 p_1^2
+24\theta^3 p_1 p_2^2
\\
&&{} +\bigl(-41\theta^3-6\theta^4-42\theta^2\bigr)
p_4 p_1
\\
&&{} +\bigl(-6\theta^3-
\theta^4-7\theta^2\bigr) p_3
p_2+30 p_\theta^4.
\end{eqnarray*}
This example shows that $D_\theta^2$, $D_\theta^3$, $D_\theta^2
\circ D_\theta^2$ and $\operatorname{id}$ are independent operators
on~$\Lambda_N$. Notice also that $D_\theta^3 p_\lambda$ has
positivity issues: the partitions of Cayley distance~$2$ from the
starting partition $\lambda$ are always positive, whereas the ones
that differ from $\lambda$ by one transposition might become negative.
So in order to make $D_\theta^3$ into a Markov matrix, one needs to
add a sufficiently negative multiple of $D_\theta^2$. We have not
tried to compute the optimal multiple here since we are unable to glean
any nice pattern from the numerical example above; in particular,\vspace*{1pt} the
coefficients cannot be made into simple powers of $\theta$. Observe
that $D_\theta^2 \circ D_\theta^2 p_\lambda$ also has positivity
problem, but it is much easier to fix, since one can simply add a
multiple of the identity to~$D_\theta^2$ as in the case treated by
\cite{DiaconisHanlon}.

\section{Compositions of \texorpdfstring{$D_\theta^2$}{$D_theta^2$} for different \texorpdfstring{$\theta$}{$theta$} values}
In general the eigenvalues and eigenfunctions of a Markov chain can be
highly intractable, due to the need to solve for high degree
polynomials. For instance, the Metropolis chain based on $3$-cycle
shuffle on $\mathcal{P}_n$ already requires taking square roots for $n=4$:
\begin{eqnarray*}
M_4^{(3)}(\theta) &=& \pmatrix{ 0& 0& 0& 1& 0
\cr
0& 1/8&
3/4& 0& 1/8
\cr
0& 1& 0& 0& 0
\cr
\theta^2& 0& 0&1 -
\theta^2& 0
\cr
0& \theta^2& 0& 0& 1 -
\theta^2}.
\end{eqnarray*}
The eigenvalues are
\begin{eqnarray*}
&& \bigl\{1,1,-\theta^2,\tfrac{1}{16} \bigl(1-8
\theta^2-\sqrt {193-208 \theta^2+64 \theta^4}
\bigr),
\\
&&\hspace*{54pt}\tfrac{1}{16} \bigl(1-8 \theta^2+\sqrt{193-208
\theta^2+64 \theta^4} \bigr) \bigr\}.
\end{eqnarray*}
The following result was discovered in numerical experiments:

\begin{prop} \label{Laurent}
For any Laurent polynomial $p$ in $m$ variables,\break $p(D_{\theta_1}^2,
\ldots, D_{\theta_m}^2)$ gives\vspace*{1pt} rise to a Markov chain on the set of
partitions $\mathcal{P}_n$, with eigenvalues, and left and right
eigenvectors given by rational functions of $\theta_1, \ldots, \theta
_m$. In particular, the stationary distribution is given by rational
functions of $\theta_i$'s also.
\end{prop}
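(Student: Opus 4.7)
The plan is to reduce everything to simultaneous triangularization in the monomial symmetric basis $\{m_\lambda\}_{\lambda \vdash n}$, ordered by any total order refining dominance. Recall from \cite{Macdonald} Chapter VI that the transition matrix between Jack polynomials and monomial symmetric polynomials $J_\lambda(\theta) = \sum_{\mu \le \lambda} u_{\lambda\mu}(\theta) m_\mu$ is unipotent lower triangular, with coefficients $u_{\lambda\mu}(\theta)$ rational in $\theta$. Since $J_\lambda$ diagonalizes $D_\theta^2$ with eigenvalue $\beta_\lambda(\theta)$ polynomial in $\theta$, the matrix representation of $D_\theta^2$ in the $m_\lambda$ basis is triangular with diagonal entries $\beta_\lambda(\theta)$ and all off-diagonal entries rational in $\theta$.

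First I would observe that triangularity in this fixed basis is preserved under every operation used to build $p(D_{\theta_1}^2,\ldots,D_{\theta_m}^2)$: sums, scalar multiples, products, and, for Laurent monomials, inverses (where the diagonal is nowhere zero, a generic condition on the $\theta_i$). Consequently the operator $P := p(D_{\theta_1}^2,\ldots,D_{\theta_m}^2)$ is triangular in the $m_\lambda$ basis, with diagonal entry at $\lambda$ equal to $p(\beta_\lambda(\theta_1),\ldots,\beta_\lambda(\theta_m))$ (a Laurent polynomial in the $\theta_i$, hence rational), and with all off-diagonal entries rational in the $\theta_i$ as well. In particular the different operators $D_{\theta_i}^2$ need not commute on the nose, but they are simultaneously triangularizable in the common basis $\{m_\lambda\}$, which is all we need; this is the concrete incarnation of the solvable-Lie-algebra remark in the acknowledgement.

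The eigenvalues of $P$ are exactly these diagonal entries. Since the vector $(\beta_\lambda(\theta_i))_i$ separates distinct $\lambda$ generically, the eigenvalues are distinct and each eigenspace is one-dimensional. Then the right eigenvector attached to the eigenvalue indexed by $\lambda$ is obtained by back-substitution from a triangular linear system whose coefficients are rational in the $\theta_i$; the solution is therefore rational in the $\theta_i$ as well. Left eigenvectors are obtained by the same argument applied to the transpose (or, when $P$ is reversible, via Lemma~\ref{doubly stochastic}). The top eigenvalue corresponds to $\lambda = (n)$; the associated left eigenvector, properly normalized, is the stationary distribution, so its entries are rational functions of the $\theta_i$ as claimed.

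The main obstacle lies in the ``Markov chain'' qualifier, since the triangularization argument by itself only delivers the algebraic eigenstructure. Row sums equal to $1$ can be enforced by rescaling $p$ so that the eigenvalue at $(n)$ is $1$, using the fact that $\beta_{(n)}(\theta) = 1$ and that $P$ then fixes the constant function. Nonnegativity of entries, on the other hand, is not automatic for arbitrary $p$; it must be arranged by an affine shift of the form $c_0 I + \sum_i c_i D_{\theta_i}^2 + \cdots$ with $c_0$ taken large enough to dominate the negative off-diagonal terms, exactly as in the $T_\theta$ construction at the start of Section~4 and in the proof of Theorem~\ref{diagonalization}. This affine adjustment leaves eigenvectors unchanged and shifts/scales the eigenvalues, so the rational eigenstructure statement is unaffected; thus once positivity is secured by one such $p$, one obtains a bona fide Markov chain on $\cP_n$ whose complete spectral data is rational in $(\theta_1,\ldots,\theta_m)$.
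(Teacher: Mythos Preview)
Your approach coincides with the paper's: both argue via simultaneous triangularity of the $D_{\theta_i}^2$ in the monomial symmetric basis (ordered compatibly with dominance) with entries rational in the $\theta_i$, then read eigenvalues off the diagonal and recover eigenvectors by back-substitution/row reduction. You are in fact more thorough than the paper's short proof in addressing the ``Markov chain'' qualifier (row-sum normalization and positivity via an affine shift), which the paper leaves implicit.
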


\begin{pf}
When expressed in the monomial symmetric basis, $D_\theta^2$ is
unipotent (upper triangular with $1$'s on the diagonal), with respect
to any total ordering on $\mathcal{P}_n$ compatible with the natural
partial ordering $\preceq$, whereby $\lambda\preceq\mu$ if $\lambda
_1 + \cdots+ \lambda_r \le\mu_1 + \cdots+ \mu_r$ for all $r$; see
\cite{Macdonald}, page 317, equation (3.7). Thus fixing this
total-ordering, any Laurent polynomial of $D_{\theta_i}$'s is clearly
still unipotent. The eigenvalues are simply the diagonal entries, and
the eigenvectors can be computed using simple row reduction, which also
results in rational components.
\end{pf}

The above result is clearly also true for $D_\theta^k$ in general and
even Macdonald operators. Thus\vspace*{1pt} in principle, one can compute the
$\mathcal{L}^2$ mixing time of Markov chain of the form $D_{\theta
_1}^2 D_{\theta_2}^2$, whose stationary distribution can be quite
complicated: for $n=3$, the stationary probabilities are
\begin{eqnarray*}
&&\biggl\{\frac{\theta_2  (\theta_1 (3-4 \theta_2)+\theta_1^2
(-1+\theta_2)+\theta_2 )}{8+\theta_2+\theta_1^2 (-1+\theta
_2) \theta_2-\theta_1  (-1+9 \theta_2+\theta_2^2
)},
\\
&&\hspace*{6.5pt} \frac{3  (-(-3+\theta_2) \theta_2+\theta_1  (1-4
\theta_2+\theta_2^2 ) )}{8+\theta_2+\theta_1^2
(-1+\theta_2) \theta_2-\theta_1  (-1+9 \theta_2+\theta
_2^2 )},
\\
&&\hspace*{8.5pt} \frac{-2 \theta_1+2 (-2+\theta_2)^2}{8+\theta_2+\theta_1^2
(-1+\theta_2) \theta_2-\theta_1  (-1+9 \theta_2+\theta
_2^2 )}\biggr\}.
\end{eqnarray*}

\section{Extensions to other root systems}

The appropriate generalization of the Laplace--Beltrami operator to
root systems other than $A_N$ is given by the Heckman--Opdam operator
(see \cite{HeckmanOpdamI} and \cite{Beerends}),
\begin{eqnarray*}
L_N(\kappa,R) &=& \Delta+ \kappa V_N:= \sum
_{i=1}^N \partial_{t_i}^2 +
\sum_{\alpha\in R_+} \kappa_\alpha\coth(\alpha/2)
\partial _\alpha,
\end{eqnarray*}
where $R$ denotes an arbitrary root system, $R_+$ a designated set of
positive roots and $\kappa_\alpha$ is called a multiplication
function, invariant under the action of the Weyl group on $R_+$. For
more on root systems and Weyl groups, consult the first 3 chapters of~\cite{GoodmanWallach} as well as Chapters~19 and 20 of \cite{Bump}.

Fascinated by the success of the $A_N$ root system, Diaconis raised the
following:

\begin{question}
Are there other root systems beside those of type $A_N$ whose
associated Heckman--Opdam operators give rise to nontrivial Markov
chains with algebraically tractible spectral decomposition?
\end{question}

We study root system $D_N$ in detail here; $B_N$ and $C_N$ are similar.
These come from the irreducible decomposition of the adjoint
representation of the maximal torus in the\vspace*{1pt} compact Lie groups
$\mathit{SO}(2N,\mathbb{R})$. The positive roots can be chosen as the set $\{
x_i x_j^{-1}, x_i x_j\dvtx  1\le i < j \le N\}$ on the maximal torus; in the
associated Cartan subalgebra (the Lie subalgebra corresponding to the
maximal torus), they become $\{t_i - t_j, t_i + t_j\dvtx  1 \le i < j \le N\}
$. The appropriate analogue of the power sum polynomials appears to be
the following power sum symmetric Laurent polynomials:
\[
p_a = \sum_{i=1}^N \cosh(a
t_i) = \sum_{i=1}^N
\bigl[x_i^a + x_i^{-a}\bigr]/2,
\]
where $x_i = e^{t_i}$. And as in the case of $A_N$, $p_\lambda= \prod_{i=1}^{\ell(\lambda)} p_{\lambda_i}$. By direct computation we have
\begin{eqnarray*}
\Delta p_\lambda&=& p_\lambda\Biggl\{\sum
_{i=1}^{\ell(\lambda)} \lambda _i^2 +
\sum_{1 \le i < j \le\ell(\lambda)} \lambda_i
\lambda_j\biggl[ \frac{p_{\lambda_i + \lambda_j}}{p_{\lambda_i} p_{\lambda_j}} - \frac{p_{\lambda_i - \lambda_j}}{p_{\lambda_i}p_{\lambda_j}}\biggr]
\Biggr\},
\\[-2pt]
\sum_{\theta\in R_+} \coth(\theta/2) \partial_\alpha
p_a &=& 2a \sum_{i \neq j} \sum
_{\ell=0}^{a-1} \cosh(\ell t_i) \cosh
\bigl((a-\ell ) t_j\bigr)
\\[-2pt]
&=&\bigl(2an -a^2 -a\bigr)p_a + 2a \sum
_{\ell=1}^{a-1} p_\ell p_{a - \ell} - a
\sum_{\ell=1}^{a-1} p_{a -2 \ell},
\\[-2pt]
\sum_{\ell=1}^{a-1} p_{a - 2 \ell} &=&
\cases{
\displaystyle 2 \sum_{\ell=1}^{(a-1)/2} p_{a - 2\ell}, &\quad if $a$ is odd,
\vspace*{5pt}\cr
\displaystyle N+ 2 \sum _{\ell=1}^{(a-2)/2} p_{a - 2\ell}, &\quad if $a$ is
even}
\\[-2pt]
&=& 2 \sum_{\ell=1}^{\lfloor {a}/{2} \rfloor} p_{a -2\ell},
\end{eqnarray*}
if we define $p_0:= N/2$.

Therefore for $n = |\lambda| = \sum_i \lambda_i$,
\begin{eqnarray*}
&& \sum_{\alpha\in R_+} \coth(\alpha/2) \partial_\alpha
p_\lambda
\\
&&\qquad = \Biggl((2N-1)n - \sum_{i=1}^{\ell(\lambda)}
\lambda_i^2\Biggr) p_\lambda+\sum
_{i=1}^{\ell(\lambda)} \frac{p_\lambda}{p_{\lambda_i}} 2 \lambda
_i \Biggl[ \sum_{\ell=1}^{\lambda_i -1}
p_\ell p_{\lambda_i - \ell} - \sum_{\ell=1}^{\lfloor\lambda_i /2 \rfloor}
p_{\lambda_i - 2 \ell}\Biggr].
\end{eqnarray*}

Restricting to partitions of the top grading, $n$, clearly the
transition coefficients are affine transformation of those in the $A_N$
case, and hence nothing new is obtained this way. There are several
pathological features regarding the action of $L_N(D, \kappa):= \Delta
+ \sum_{\alpha\in R_+} \kappa_\alpha\coth(\alpha/2) \partial
_\alpha$ on the power sum analogues of symmetric Laurent polynomials
(see the toy example below):
\begin{longlist}[(2)]
\item[(1)] there are no easy ways to make the entries all positive;
\item[(2)] the row sums are not the same.
\end{longlist}
Thus it remains difficult to interpret the full transition matrix as a
Markov kernel. For root system $D_N$, there is only one Weyl orbit,
hence $\kappa_\alpha\equiv\kappa$. The Heckman--Opdam functions
have rational transition coefficients to this power sum Laurent basis,
as illustrated by the following numerical example ($\mathcal{P}_k$
denotes the set of partitions of $k$):
{\fontsize{10pt}{13pt}\selectfont{\begin{eqnarray*}
\hspace*{-5pt}&& M_\kappa(N)|_{\mathcal{P}_3 \cup\mathcal{P}_1}
\\
\hspace*{-5pt}&&\qquad:=
\pmatrix{ 9 + \kappa(-12 + 6N)& 12 \kappa& 0& -6 \kappa
\cr
2& 5 + \kappa(-8 + 6 N)& 4 \kappa& -2 - 2 \kappa N
\cr
0& 3& 3 + \kappa(-6 + 6 N)& -3
\cr
0& 0& 0& 1 +
\kappa(-2 + 2 N)},
\end{eqnarray*}}}%
where the columns and rows are indexed by $(3),(21),(1^3),(1)$.
The eigenvalues are very clean,
\begin{eqnarray*}
& \displaystyle 3 + 6 \kappa(-2 + N),\qquad
9 + 6 \kappa(-1 + N),&
\\
&\displaystyle 5 +  \kappa(-8 + 6 N), \qquad
1 + 2 \kappa(-1 + N).&
\end{eqnarray*}
The left eigenvectors are rational functions of $\kappa$, which we
display as rows of the following matrix:\vspace*{5pt}
{\fontsize{10.1pt}{13pt}\selectfont{\begin{eqnarray*}
\pmatrix{ \displaystyle \frac{-5 +\theta+ 2 N}{3 (-1 + N)}& \displaystyle-\frac{-5+\theta+ 2 N}{-1 + N}&
\displaystyle\frac{2 (-5 +\theta+ 2 N)}{3 (-1 + N)}& 1
\vspace*{7pt}\cr
0& 0& 0& 1
\vspace*{7pt}\cr
\displaystyle -
\frac{2\theta(-1 + 2\theta+ N)}{3 (1 + 2\theta+ N)}& \displaystyle\frac {-2 (-1 + 2\theta+ N)}{1 + 2\theta+ N}& \displaystyle-
\frac{4 (-1 + 2\theta+ N)}{
3\theta(1 + 2\theta+ N)}& 1
\vspace*{7pt}\cr
\displaystyle\frac{\theta(-3 + 2\theta+ 2 N)}{\theta+ 2\theta^2 - 2 N + 2\theta N}& \displaystyle-
\frac{ 2 (-1 +\theta) (-3 +
2\theta+ 2 N)}{\theta+ 2\theta^2 - 2 N + 2\theta N}& \displaystyle\frac{12 - 8\theta- 8 N}{\theta+ 2\theta^2 - 2 N + 2\theta N}& 1}.
\end{eqnarray*}}}\vspace*{10pt}%
Here $\theta= \kappa^{-1}$ corresponds to the parameter in the $A_N$ case.

We have also tried to adjust the diagonal entries to make the row sum
equal to~$1$; the resulting matrix however does not have rational
eigenvalues in the entries.
\end{appendix}

\section*{Acknowledgments}
I thank Persi Diaconis for introducing me to this problem as well as
providing guidance throughout the writing. I also thank Calvin
(Yuncheng) Lin for suggesting that the Lie algebra formed by $D_\alpha
$ and natural Lie bracket is solvable and hence can be simultaneously
uni-upper-triangularized by a result of Serre. Several of the results
in the \hyperref[app]{Appendix} are computed and verified using Mathematica 6.0 and
Maple 13 together with the package SF developed by John Stembridge.


%

\printaddresses

\begin{thebibliography}{22}

\bibitem{EwensPoisson}
%
\begin{barticle}[mr]
\bauthor{\bsnm{Arratia},~\bfnm{Richard}\binits{R.}},
\bauthor{\bsnm{Barbour},~\bfnm{A.~D.}\binits{A.~D.}} \AND
\bauthor{\bsnm{Tavar{\'e}},~\bfnm{Simon}\binits{S.}}
(\byear{1992}).
\btitle{Poisson process approximations for the {E}wens sampling formula}.
\bjournal{Ann. Appl. Probab.}
\bvolume{2}
\bpages{519--535}.
\bid{issn={1050-5164}, mr={1177897}}
\end{barticle}
%
\bptok{imsref}%
\endbibitem

\bibitem{dovetail}
%
\begin{barticle}[mr]
\bauthor{\bsnm{Bayer},~\bfnm{Dave}\binits{D.}} \AND
\bauthor{\bsnm{Diaconis},~\bfnm{Persi}\binits{P.}}
(\byear{1992}).
\btitle{Trailing the dovetail shuffle to its lair}.
\bjournal{Ann. Appl. Probab.}
\bvolume{2}
\bpages{294--313}.
\bid{issn={1050-5164}, mr={1161056}}
\end{barticle}
%
\bptok{imsref}%
\endbibitem

\bibitem{Beerends}
%
\begin{barticle}[mr]
\bauthor{\bsnm{Beerends},~\bfnm{R.~J.}\binits{R.~J.}}
(\byear{1991}).
\btitle{Chebyshev polynomials in several variables and the radial part
of the {L}aplace--{B}eltrami operator}.
\bjournal{Trans. Amer. Math. Soc.}
\bvolume{328}
\bpages{779--814}.
\bid{doi={10.2307/2001804}, issn={0002-9947}, mr={1019520}}
\end{barticle}
%
\bptok{imsref}%
\endbibitem

\bibitem{Bump}
%
\begin{bbook}[mr]
\bauthor{\bsnm{Bump},~\bfnm{Daniel}\binits{D.}}
(\byear{2004}).
\btitle{Lie Groups}.
\bseries{Graduate Texts in Mathematics}
\bvolume{225}.
\bpublisher{Springer},
\blocation{New York}.
\bid{doi={10.1007/978-1-4757-4094-3}, mr={2062813}}
\end{bbook}
%
\bptok{imsref}%
\endbibitem

\bibitem{PD}
%
\begin{bbook}[mr]
\bauthor{\bsnm{Diaconis},~\bfnm{Persi}\binits{P.}}
(\byear{1988}).
\btitle{Group Representations in Probability and Statistics}.
\bseries{Institute of Mathematical Statistics Lecture Notes---Monograph Series}
\bvolume{11}.
\bpublisher{IMS},
\blocation{Hayward, CA}.
\bid{mr={0964069}}
\end{bbook}
%
\bptok{imsref}%
\endbibitem

\bibitem{Aux}
\begin{barticle}[mr]
\bauthor{\bsnm{Diaconis},~\bfnm{Persi}\binits{P.}} \AND
\bauthor{\bsnm{Ram},~\bfnm{Arun}\binits{A.}}
(\byear{2012}).
\btitle{A probabilistic interpretation of the {M}acdonald polynomials}.
\bjournal{Ann. Probab.}
\bvolume{40}
\bpages{1861--1896}.
\bid{doi={10.1214/11-AOP674}, issn={0091-1798}, mr={3025704}}
\end{barticle}
\bptok{imsref}%
\endbibitem

\bibitem{DiaconisShahshahani}
%
\begin{barticle}[mr]
\bauthor{\bsnm{Diaconis},~\bfnm{Persi}\binits{P.}} \AND
\bauthor{\bsnm{Shahshahani},~\bfnm{Mehrdad}\binits{M.}}
(\byear{1981}).
\btitle{Generating a random permutation with random transpositions}.
\bjournal{Z. Wahrsch. Verw. Gebiete}
\bvolume{57}
\bpages{159--179}.
\bid{doi={10.1007/BF00535487}, issn={0044-3719}, mr={0626813}}
\end{barticle}
%
\bptok{imsref}%
\endbibitem

\bibitem{DummitFoote}
%
\begin{bbook}[mr]
\bauthor{\bsnm{Dummit},~\bfnm{David~S.}\binits{D.~S.}} \AND
\bauthor{\bsnm{Foote},~\bfnm{Richard~M.}\binits{R.~M.}}
(\byear{1991}).
\btitle{Abstract Algebra}.
\bpublisher{Prentice Hall},
\blocation{Englewood Cliffs, NJ}.
\bid{mr={1138725}}
\end{bbook}
%
\bptok{imsref}%
\endbibitem

\bibitem{FH}
%
\begin{bbook}[mr]
\bauthor{\bsnm{Fulton},~\bfnm{William}\binits{W.}} \AND
\bauthor{\bsnm{Harris},~\bfnm{Joe}\binits{J.}}
(\byear{1991}).
\btitle{Representation Theory: A First Course}.
\bseries{Graduate Texts in Mathematics}
\bvolume{129}.
\bpublisher{Springer},
\blocation{New York}.
\bid{doi={10.1007/978-1-4612-0979-9}, mr={1153249}}
\end{bbook}
%
\bptok{imsref}%
\endbibitem

\bibitem{GoodmanWallach}
%
\begin{bbook}[mr]
\bauthor{\bsnm{Goodman},~\bfnm{Roe}\binits{R.}} \AND
\bauthor{\bsnm{Wallach},~\bfnm{Nolan~R.}\binits{N.~R.}}
(\byear{1998}).
\btitle{Representations and Invariants of the Classical Groups}.
\bseries{Encyclopedia of Mathematics and Its Applications}
\bvolume{68}.
\bpublisher{Cambridge Univ. Press},
\blocation{Cambridge}.
\bid{mr={1606831}}
\end{bbook}
%
\bptok{imsref}%
\endbibitem

\bibitem{Hanlon}
%
\begin{barticle}[mr]
\bauthor{\bsnm{Hanlon},~\bfnm{Phil}\binits{P.}}
(\byear{1992}).
\btitle{A {M}arkov chain on the symmetric group and {J}ack symmetric functions}.
\bjournal{Discrete Math.}
\bvolume{99}
\bpages{123--140}.
\bid{doi={10.1016/0012-365X(92)90370-U}, issn={0012-365X}, mr={1158785}}
\end{barticle}
%
\bptok{imsref}%
\endbibitem

\bibitem{HeckmanOpdamI}
%
\begin{barticle}[mr]
\bauthor{\bsnm{Heckman},~\bfnm{G.~J.}\binits{G.~J.}} \AND
\bauthor{\bsnm{Opdam},~\bfnm{E.~M.}\binits{E.~M.}}
(\byear{1987}).
\btitle{Root systems and hypergeometric functions.~{I}}.
\bjournal{Compos. Math.}
\bvolume{64}
\bpages{329--352}.
\bid{issn={0010-437X}, mr={0918416}}
\end{barticle}
%
\bptok{imsref}%
\endbibitem


\bibitem{Johnson}
%
\begin{bbook}[mr]
\bauthor{\bsnm{Johnson},~\bfnm{Norman~L.}\binits{N.~L.}},
\bauthor{\bsnm{Kotz},~\bfnm{Samuel}\binits{S.}} \AND
\bauthor{\bsnm{Balakrishnan},~\bfnm{N.}\binits{N.}}
(\byear{1997}).
\btitle{Discrete Multivariate Distributions}.
\bpublisher{Wiley},
\blocation{New York}.
\bid{mr={1429617}}
\end{bbook}
%
\bptok{imsref}%
\endbibitem

\bibitem{Koike}
%
\begin{barticle}[mr]
\bauthor{\bsnm{Koike},~\bfnm{Kazuhiko}\binits{K.}}
(\byear{1993}).
\btitle{On a conjecture of {S}tanley on {J}ack symmetric functions}.
\bjournal{Discrete Math.}
\bvolume{115}
\bpages{211--216}.
\bid{doi={10.1016/0012-365X(93)90490-K}, issn={0012-365X}, mr={1217630}}
\end{barticle}
%
\bptok{imsref}%
\endbibitem

\bibitem{Koornwinder}
%
\begin{bincollection}[mr]
\bauthor{\bsnm{Koornwinder},~\bfnm{Tom~H.}\binits{T.~H.}}
(\byear{1994}).
\btitle{Special functions associated with root systems: Recent progress}.
In \bbooktitle{From Universal Morphisms to Megabytes: A {B}aayen Space Odyssey}
\bpages{391--404}.
\bpublisher{Math. Centrum, Centrum Wisk. Inform.},
\blocation{Amsterdam}.
\bid{mr={1490602}}
\end{bincollection}
%
\bptok{imsref}%
\endbibitem

\bibitem{Yuval-book}
%
\begin{bbook}[mr]
\bauthor{\bsnm{Levin},~\bfnm{David~A.}\binits{D.~A.}},
\bauthor{\bsnm{Peres},~\bfnm{Yuval}\binits{Y.}} \AND
\bauthor{\bsnm{Wilmer},~\bfnm{Elizabeth~L.}\binits{E.~L.}}
(\byear{2009}).
\btitle{Markov Chains and Mixing Times}.
\bpublisher{Amer. Math. Soc.},
\blocation{Providence, RI}.
\bid{mr={2466937}}
\end{bbook}
%
\bptok{imsref}%
\endbibitem

\bibitem{Macdonald}
%
\begin{bbook}[mr]
\bauthor{\bsnm{Macdonald},~\bfnm{I.~G.}\binits{I.~G.}}
(\byear{1995}).
\btitle{Symmetric Functions and {H}all Polynomials},
\bedition{2nd} ed.
\bseries{Oxford Mathematical Monographs}.
\bpublisher{Oxford Univ. Press},
\blocation{New York}.
\bid{mr={1354144}}
\end{bbook}
%
\bptok{imsref}%
\endbibitem

\bibitem{MacdonaldRoots}
%
\begin{barticle}[mr]
\bauthor{\bsnm{Macdonald},~\bfnm{I.~G.}\binits{I.~G.}}
(\byear{2000/2001}).
\btitle{Orthogonal polynomials associated with root systems}.
\bjournal{S\'em. Lothar. Combin.}
\bvolume{45}
\bpages{Art. B45a, 40 pp. (electronic)}.
\bid{issn={1286-4889}, mr={1817334}}
\end{barticle}
%
\bptok{imsref}%
\endbibitem

\bibitem{MorrisLreversal}
%
\begin{barticle}[mr]
\bauthor{\bsnm{Morris},~\bfnm{Ben}\binits{B.}}
(\byear{2009}).
\btitle{Improved mixing time bounds for the {T}horp shuffle and
{$L$}-reversal chain}.
\bjournal{Ann. Probab.}
\bvolume{37}
\bpages{453--477}.
\bid{doi={10.1214/08-AOP409}, issn={0091-1798}, mr={2510013}}
\end{barticle}
%
\bptok{imsref}%
\endbibitem

\bibitem{StanleyJack}
%
\begin{barticle}[mr]
\bauthor{\bsnm{Stanley},~\bfnm{Richard~P.}\binits{R.~P.}}
(\byear{1989}).
\btitle{Some combinatorial properties of {J}ack symmetric functions}.
\bjournal{Adv. Math.}
\bvolume{77}
\bpages{76--115}.
\bid{doi={10.1016/0001-8708(89)90015-7}, issn={0001-8708}, mr={1014073}}
\end{barticle}
%
\bptok{imsref}%
\endbibitem

\bibitem{DiaconisHanlon}
%
\begin{barticle}[auto]
\bauthor{\bsnm{Diaconis},~\bfnm{Persi}\binits{P.}} \AND
\bauthor{\bsnm{Hanlon},~\bfnm{Phil}\binits{P.}}
(\byear{1992}).
\btitle{Eigen-analysis for some examples of the {M}etropolis algorithm}.
\bjournal{Contemp. Math.}
\bvolume{138}
\bpages{99--117}.
\end{barticle}
%
\bptok{imsref}%
\endbibitem

\end{thebibliography}
\end{document}